\documentclass[a4paper]{easychair}

\usepackage{comment}
\usepackage{url}
\usepackage{hyperref}
\usepackage{graphicx}
\usepackage[utf8]{inputenc}            
\usepackage{amssymb,latexsym, dsfont, amsfonts, amsbsy, amsmath, mathrsfs, dsfont}            
\usepackage{mathtools}                   
\usepackage{bm}                          
\usepackage{enumerate}                   
\usepackage{verbatim}                    
\usepackage{url}                         

\usepackage{color}
\usepackage{tabularx}
\usepackage{microtype}
\usepackage{latexsym}
\usepackage{amsmath,amssymb,amscd}
\usepackage{graphicx}
\usepackage{tikz}
\usepackage{enumitem}

\usepackage[all,cmtip]{xy}

\newtheorem{theorem}{Theorem}[section]
\newtheorem{corollary}[theorem]{Corollary}
\newtheorem{lemma}[theorem]{Lemma}

\newtheorem{definition}[theorem]{Definition}
\newtheorem{remark}[theorem]{Remark}

\newtheorem{observation}[theorem]{Observation}

\newcommand{\Go}[1]{{G\"{o}del} }

\usepackage{makecell}
\setcellgapes{2pt}

\usepackage{color}

\DeclareMathSymbol{\sm}{\mathbin}{AMSa}{"39}

\newcommand{\class}[1]{\mathds{#1}}
\renewcommand{\mod}[1]{\mathfrak{#1}}

\begin{document}
\setcounter{page}{1}     




\title{Axiomatizing logics of finite G\"odel-Kripke models}
\author{Amanda Vidal$^1$ \\ amanda@cas.cas.cz  \and \\ Ricardo O. Rodriguez$^2$ \\ ricardo@dc.uba.ar}

\institute{
Institute of Computer Science of the Czech Academy of Sciences \and  University of Buenos Aires, 
Buenos Aires, Argentina
}

	\maketitle
	
\titlerunning{Axiomatizing logics of finite G\"odel-Kripke models}
\authorrunning{A. Vidal, R.O. Rodriguez}

\begin{abstract}

We investigate completeness for modal G\"odel logics with respect to finite G\"odel-Kripke models, along with related aspects. It is well known that the logics studied in \cite{CaRo10, MeOl09} fail to be complete with respect to finite G\"odel-Kripke models. We show that the natural candidate axiomatic extensions do not restore completeness, thereby resolving a 15 year open problem posed in the aforementioned works. We then provide new axiomatizations that are complete for finite models and characterize intermediate witnessing conditions that hold for the basic logics.


\end{abstract}

\section{Introduction} 

Modal G\"odel logics constitute a robust and mathematically rich family of many-valued systems. They arise naturally by extending G\"odel logic (the  semilinear extension of intuitionistic logic) with modal operators that generalize the standard Kripkean framework to a fuzzy or graded setting. These logics represent a synthesis of many-valued reasoning and modality, capturing nuanced, graded notions of necessity ($\Box$) and possibility ($\Diamond$) while preserving the fundamental algebraic and order-theoretic properties of G\"odel structures.

It is known that the logic defined as the local entailment over classes of Kripke models valued over the standard Gödel algebra coincides with the one resulting from adding semilinearity to Fischer-Servi Intuitionistic modal logic \cite{FS84b} KIC. The former perspective, which we address informally as ``valued Kripke semantics" is naturally grounded in the interaction between Kripke-style semantics and propositional algebras. The latter, ``purely relational semantics", yields a semantics based on purely relational models (with two relations: one governing the propositional connectives and the second handling the modal operators), via the completeness of KIC from \cite{FS84b}. Both semantics can be interpreted in the more general algebraic semantics, via the construction of the corresponding complex algebras, losing, however, intuition and relational appeal.

The central focus of the present work is the divergence between crucial properties of modal G\"odel logics in comparing the above semantics. It is established that the basic logic $K$ possesses the Finite Model Property (FMP)\cite{CaMeRo17}, so it enjoys this property with respect to its algebraic semantics of modal G\"odel algebras. Consequently, any non-theorem is refutable in some finite algebraic structure. However, this ``finitary" behavior fails to translate to the intended G\"odel-Kripke semantics over the standard interval $[0,1]$. It is proven that there exist formulas valid in all finite $[0,1]$-valued Kripke models that are not theorems of the logic \cite{CaRo10,MeOl09}. The phenomenon persists even when restricting the scope to crisp (classical) accessibility relations or mono-modal fragments, suggesting that the ``finiteness" of a world-set imposes logical constraints whose meaning in the algebraic semantics is necessarily different.

This paper is motivated by the problem of characterizing the logical entailment induced by finite valued relational models. In this context, we investigate the phenomena of \textit{witnessing} and \textit{unwitnessing}. In an infinite G\"odel-Kripke model, the truth value of $\Diamond \varphi$ and $\Box \varphi$ is defined as a supremum or infimum over accessible states, which need not be attained at any particular world. By contrast, every finite model is necessarily \textit{witnessed}, since every finite subset of $[0,1]$ has a maximum and a minimum. From this perspective, the logic of finite models can be understood as capturing modalities based on attained suprema and infima.

From a mathematical point of view, this leads to a more precise understanding of the unwitnessing phenomena in G\"odel modal logics. In particular, we settle an open problem posed in \cite{CaRo10} (see the end of p.~198), as well as the analogous question arising from \cite{MeOl09}, by showing that the formulas previously used to establish the failure of the finite model property (with respect to this semantics) are not sufficient to axiomatize the corresponding finite semantics. We then introduce complete axiomatizations for the logics under study. In complementary results we also clarify interesting intermediate conditions, obtaining completeness results for the basic modal logics  with respect to classes of models associated with partial witnessing conditions.

The results developed here also suggest several further directions and applications. First, the methods introduced in this paper are expected to extend naturally to G\"odel $K45$ and $KD45$ logics, both in crisp and valued semantics \cite{BoEsGoRo16}, potentially yielding more transparent semantic characterizations for these systems. Moreover, since modal logics may be viewed as fragments of first-order logics, the present results may also be relevant in connection with the developments of \cite{BI-IGPL2016, IEMH10}. Finally, our work complements the proof-theoretic investigations of \cite{FeFiRo25} and \cite{FeFiGiRo26}, where cut-free sequent calculi for $\class{KG}^\omega$ and $\class{MG}^\omega$ are introduced. While those calculi provide refutation and decision procedures, they do not constitute deductive systems for effectively generating the theorems of the corresponding logics.

The main contributions of this paper are the following ones. We propose axiomatic systems that are proved complete with respect to the corresponding classes of finite G\"odel-Kripke models, both for valued accessibility (Theorem \ref{th:completenessVal}) and crisp accessibility (Theorem \ref{th:completenessCrisp}). Notably, our axioms utilize isolated modalities, thereby providing also the first axiomatizations for the mono-modal fragments of these finite-model logics. We also demonstrate that the formulas from \cite{CaRo10, MeOl09} are incomplete for the class of finite models (Lemmas \ref{lem:NotWitBox} and \ref{lem:NotWitDiamond}), proving that our new approach is necessary.
Furthermore, we establish intermediate results of independent interest: although the foundational logics lack the finite model property (FMP), they retain completeness with respect to models that satisfy specific \textit{partial witnessing} conditions. Specifically, we prove that the minimal bi-modal G\"odel logic is complete for the class of $\Diamond$-witnessed models (Theorem \ref{th:valueddiamondcompleteness}), and that crisp logics satisfy a weaker but highly useful condition we term $\langle \Diamond, 1 \rangle$-witnessing (Theorem \ref{th:topwitnessed}).



\section{Preliminaries, known results and open problems}

We begin by fixing notation and recalling some basic facts about modal G\"odel logics that will be used throughout the paper.

\begin{definition}
A \emph{G\"odel-Kripke model} is a tuple $\langle W, R, e\rangle$ where $W$ is a non-empty set, $R\colon W \times W \rightarrow [0,1]$ and $e\colon W \times \mathcal{V} \rightarrow [0,1]$, such that $e$ is, world-wise, a propositional G\"odel homomorphism (from the algebra of formulas into $[0,1]$, and 
\[e(v,\Box \varphi) \coloneqq \bigwedge_{w \in W}R(v,w) \rightarrow e(w, \varphi) \qquad, \qquad e(v,\Diamond \varphi) \coloneqq \bigvee_{w \in W}R(v,w) \wedge e(w, \varphi).\]
\end{definition}
If $R(v,w) \in \{0,1\}$ for any $v,w \in W$, we say that the model is \emph{crisp}. A model is \emph{witnessed} whenever for each $\Box \varphi, \Diamond \varphi$,  
\[e(v,\Box \varphi) \coloneqq R(v,v_{\Box \varphi}) \rightarrow e(v_{\Box \varphi}, \varphi), \qquad e(v,\Diamond \varphi) \coloneqq R(v,v_{\Diamond \varphi}) \wedge e(v_{\Diamond \varphi}, \varphi)\] for some $v_{\Box \varphi}, v_{\Diamond \varphi} \in W$. 
We denote by $\class{MG}$ the full class of G\"odel-Kripke models, by $\class{KG}$ the class of crisp G\"odel-Kripke models and by $\class{MG}^{\omega}$, $\class{KG}^{\omega}$ the subclasses of witnessed models of the previous two.


Given a model $\mod{M}$, we say that $\Gamma \Vdash_{\mod{M}} \varphi$ whenever, for any $v \in W$, if $e(v, \gamma) = 1$ for any $\gamma \in \Gamma$, then $e(v, \varphi) = 1$ too.\footnote{The logical systems defined and studied in this paper are known as \emph{local} modal logics, as opposed to \emph{global} modal logics, which we do not address here. In the corresponding axiomatic systems, the fact that inference rules apply only to theorems is the key difference between local and global logics.} Note that $\Gamma, \varphi$ can be formulas in both $\Box, \Diamond$ modal operators, or only in one of them. It is known that these symbols are not interdefinable. Consequently, we will let $Fm$ be the set of bi-modal formulas, in both $\Box, \Diamond$, and $Fm_{\Box}/Fm_{\Diamond}$ the sets of mono-modal formulas (written in a single modality).

Given a class of models $\class{C}$, we say that a logic $L$ (in a modal language) is complete with respect to $\class{C}$  (or arises from $\class{C}$) whenever for any $\Gamma \cup \{\varphi\}$, 
\[\Gamma \vdash_{L} \varphi \Longleftrightarrow \forall \mod{M} \in \class{C}, v \in W,  (\forall \gamma \in  \Gamma :  e(v, \gamma) = 1) \text{ implies } e(v, \varphi) = 1.\]
Observe the previous definition will allow us to say that logics built over different modal languages -namely, for the formulas $Fm$, $Fm_\Box$ and $Fm_\Diamond$- are all complete with respect to a same class of models.

The \textit{bi-modal} logic of $\class{MG}$ was axiomatized in \cite{CaRo15}, and we will denote it by $MG$. The logic of $\class{KG}$ over the full bi-modal language was axiomatized in \cite{RoVi20}, and we will denote it by $KG$. The corresponding mono-modal fragments, which we will denote by $(MG/KG)_{(\Box/\Diamond)}$ were axiomatized in \cite{CaRo10} and \cite{MeOl09}. It is known that $MG_\Box = KG_\Box$.

We provide the details of the previous axiomatic systems here for the reader's convenience.
 The logics $MG_{\Box }$ and $MG_{\Diamond }$ are axiomatized by adding to G\"odel-Dummett\ propositional calculus the following axiom schemes and inference rules:
\begin{equation*}
\begin{array}{ll}
& \hspace{1cm} {MG}_{\Box }\\
{K}_\Box: & \Box (\varphi \rightarrow \psi)\rightarrow (\Box \varphi \rightarrow \Box \psi ) \hspace{1cm} \\
{Z}_\Box: & \lnot \lnot \Box \varphi \rightarrow \Box \lnot \lnot \varphi \\
{N}_\Box: & \vdash \varphi \emph{\ implies \ } \vdash \Box \varphi \text{ } \\
&
\end{array}%
\text{ \ }%
\begin{array}{ll}
& \hspace{1cm} {MG}_{\Diamond } \\
{K}_\Diamond: & \Diamond (\varphi \vee \psi ) \rightarrow (\Diamond \varphi \vee \Diamond \psi ) \\
{Z}_\Diamond: &  \Diamond \lnot \lnot \varphi \rightarrow \lnot \lnot \Diamond \varphi \\
{F}_\Diamond: &  \lnot \Diamond \bot \\
{N}_\Diamond: &  \vdash \varphi \rightarrow \psi \emph{\  implies \ }\vdash \Diamond \varphi
\rightarrow \Diamond \psi .%
\end{array}%
\end{equation*}

The system $KG_\Diamond$ results by replacing in the axiomatization of $MG_\Diamond$ rule ${N}_\Diamond$ by the following one:
\[{N}^c_\Diamond\colon   \vdash (\varphi \rightarrow \psi) \vee \chi \text{\emph{\ implies \ }} \vdash(\Diamond \varphi \rightarrow \Diamond \psi) \vee \Diamond \chi. \]

The  system ${MG}$ (i.e., the bi-modal godel logic) results from adding to the union of ${MG}_{\Box }$ and ${MG}_{\Diamond }$, Fischer Servi's connecting axioms \cite{FS84b}:

\[
{FS}_1 \colon \Diamond (\varphi \rightarrow \psi )\rightarrow (\Box \varphi \rightarrow
\Diamond \psi ), \qquad 
 {FS}_2\colon (\Diamond \varphi \rightarrow \Box \psi )\rightarrow \Box (\varphi
\rightarrow \psi ).
\]

Finally, the system $KG$ is obtained by adding to $MG$ the so-called Dunn's axiom:
\[ {Cr}\colon  \Box(\varphi \vee \psi) \rightarrow (\Box \varphi \vee \Diamond \psi).\]

It is not hard to see that the above logics, and those arising from $\class{MG}^\omega$ and $\class{KG}^\omega$, enjoy completeness also with respect to trees of bounded finite depth:
namely, structures for which the relation $R$ in values strictly greater than $0$ gives the model the shape of a tree (of possibly infinite width) and each branch is of depth bounded by the modal depth of the formulas in the derivation.\footnote{The idea is similar to the one used in classical (local) modal logic. For a detailed formal proof of the fact that the basic local logics are complete with respect to models (trees) of finite depth see for instance Proposition 2.7 and Corollary 2.8 in \cite{ViManuscript}, that apply equally to G\"odel logic. We emphasize that this holds because we are considering the \textit{local} entailment.} This directly implies the following observation.

\begin{observation}\label{obs:finDepthDecidable}
    The logics arising from $\class{MG}^\omega$ and from $\class{KG}^\omega$ are complete with respect to finite models of bounded depth, and so, decidable.\footnote{In more detail, the maximum size of a (possible) counter-model for a formula $\varphi$ of modal depth $n$, in $\class{MG}^\omega$ or $\class{KG}^\omega$ is given by $1 + \sum_{i = 0}^{i=n} \prod_{j\leq i }\vert \varphi^j\vert$ for $\varphi^0 \coloneqq PSFm(\varphi), \varphi^{k+1} \coloneqq PSFm(\bigwedge \varphi^k)$, and $PSFm(x) \coloneqq \emptyset, PSFm(\psi * \chi) \coloneqq PSFm(\psi) \cup PSFm(\chi) \text{ for }*\text{ propositional connective }, PSFm( \star \psi) \coloneqq \{\psi\} \text{ for }\star\text{ modal connnective}.$ This bound comes from a model with a root and, and adding at each level of modal depth less than that of $\varphi$ (i.e., that $n$), as many new worlds as to witness each possible subformula at that depth.
    }
\end{observation}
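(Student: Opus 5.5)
We start from the fact recalled just before the statement: the logics arising from $\class{MG}^\omega$ and $\class{KG}^\omega$ are complete with respect to (witnessed) trees whose depth is bounded by the modal depth $n$ of the formulas involved. Fix a witnessed tree model $\mod{M}$ with root $v_0$ at which $\Gamma$ holds and $\varphi$ fails. For the decidability part it is enough to treat finite $\Gamma$, so we may assume $\Gamma$ is finite and set $\Sigma \coloneqq \Gamma \cup \{\varphi\}$, with modal depth $n$. The plan is to prune $\mod{M}$ to a finite submodel by keeping only witnesses, level by level, and to check that the truth values of all relevant subformulas are unchanged.

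\textbf{Pruning.} Let $W_0 = \{v_0\}$. Having defined $W_k$ (worlds at depth $k$), for every $w \in W_k$ and every formula $\star\psi$ ($\star \in \{\Box,\Diamond\}$) that occurs in $\Sigma$ at modal depth exactly $k$ (the sets $\varphi^k$ of the footnote), choose a world $w_{\star\psi}$ realizing the witnessing equation at $w$. Such a world exists because $\mod{M}$ is witnessed. Let $W_{k+1}$ be the set of all chosen worlds, and stop at $k=n$. The result $W' = \bigcup_{k\le n} W_k$ is finite, with cardinality bounded as in the footnote. We restrict $R$ and $e$ to $W'$, and for the crisp case crispness is obviously preserved.

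\textbf{Truth preservation.} By downward induction on $k$ from $n$ to $0$, and within each level by induction on formula complexity, we prove that for every $w\in W_k$ and every subformula $\chi$ occurring at depth $\ge k$ in $\Sigma$ we have $e'(w,\chi)=e(w,\chi)$. Propositional steps are immediate. Consider $\chi=\Box\psi$. The infimum in $\mod{M}'$ ranges over a subset of the successors in $\mod{M}$, so $e'(w,\Box\psi)\ge e(w,\Box\psi)$, using the induction hypothesis on $\psi$ at the successors. Moreover the witness $w_{\Box\psi}$ was kept, so $e'(w,\Box\psi)\le R(w,w_{\Box\psi})\to e'(w_{\Box\psi},\psi)=e(w,\Box\psi)$. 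The $\Diamond$ case is dual. Hence $e'(v_0,\gamma)=1$ for all $\gamma\in\Gamma$ and $e'(v_0,\varphi)<1$. We also note that $\mod{M}'$ is finite and hence automatically witnessed, so it lies in the intended class.

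\textbf{Main obstacle and decidability.} The delicate point is the bookkeeping in the induction: we need the induction hypothesis for $\psi$ at \emph{every} retained successor of $w$, not only at the witnesses. Each retained successor $u$ lies in $W_{k+1}$, and $\psi$ occurs at depth $\ge k+1$, so the downward induction covers this. Once $\Sigma$ is closed under subformulas at each depth, everything fits. Decidability then follows because there is a computable bound $N(\Sigma)$ on model size. By Gödel semantics, the values in finite models can be replaced, via an order-preserving relabeling fixing $0,1$, by values in a finite chain of size bounded by the number of worlds times subformulas. So one enumerates the finitely many candidate models up to that bound, which gives a decision procedure. Alternatively, combine the recursive enumerability of theorems from the axiomatization with the enumeration of finite countermodels.
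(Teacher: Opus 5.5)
Your route is the paper's own: start from completeness for witnessed trees whose depth is bounded by the modal depth, keep only witnesses level by level (the construction behind the footnote's size bound), note that the finite result is automatically witnessed, and decide by bounded search. The problem is the truth-preservation invariant, which is false as stated. At $w\in W_k$ you select witnesses only for modal formulas occurring at depth \emph{exactly} $k$, yet you claim $e'(w,\chi)=e(w,\chi)$ for every $\chi$ occurring at depth $\ge k$. Take $\Gamma=\emptyset$, $\varphi=\Box\Box p$ and $e(v_0,\Box\Box p)<1$. The only successor of $v_0$ you retain is the witness $u$ of $\Box\Box p$, so $e'(v_0,\Box p)=R(v_0,u)\to e(u,p)$. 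In general this is strictly above $e(v_0,\Box p)$, although $\Box p$ occurs at depth $1\ge 0$. Your step ``the witness $w_{\Box\psi}$ was kept'' is simply unavailable for such $\chi$. The fix is to prove the invariant only for subformulas occurring at depth exactly $k$. For $\chi=\star\psi$ of that kind the witness at $w$ was kept, and $\psi$ occurs at depth exactly $k+1$, so the hypothesis at level $k+1$ applies. At $k=0$ this still covers $\Gamma\cup\{\varphi\}$. Alternatively, select at level $k$ witnesses for all modal subformulas of depth $\ge k$, at the price of a worse bound than the footnote's.

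A second bookkeeping point is hidden in ``each retained successor $u$ lies in $W_{k+1}$'': this needs every selected witness to be a child of the world it witnesses. That is automatic for non-trivial values, since $e(w,\Box\psi)<1$ forces $R(w,w_{\Box\psi})>0$ and $e(w,\Diamond\psi)>0$ forces $R(w,w_{\Diamond\psi})>0$. But a witness for $e(w,\Box\psi)=1$ or $e(w,\Diamond\psi)=0$ may be any non-successor, e.g.\ $v_0$ itself. Admitting it mixes the levels: some children of $v_0$ can then enter only at level $2$, where nothing guarantees that depth-$1$ formulas keep their values. The lower bound $e'(v_0,\Box\psi)\ge e(v_0,\Box\psi)$ can then fail. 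So select witnesses only for non-trivial values; the trivial ones are preserved by your one-sided inequalities alone. With these two repairs the argument goes through, including the decision procedure by bounded search over value-relabelled models (in the valued case the finite chain must also contain the values of $R$).
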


A central theme in the study of these logics is the interaction between completeness, decidability, and witnessing conditions. Several results in the literature address this relationship. On the one hand, it is known that  $MG_\Diamond$ is complete with respect to $\class{MG}^\omega$, and so, it is decidable by Observation \ref{obs:finDepthDecidable} above. On the other hand, the full bi-modal logics $MG$ and $KG$, as well as the box fragments $(MG/KG)_\Box$, fail to be complete with respect to their classes of witnessed models \cite{MeOl09,CaRo10} (and hence do not enjoy the FMP with respect to our intended semantics).
The following formulas, introduced in the previous references, give counterexamples witnessing these failures: formula $UW_\Box$ for the $\Box$ fragments and for $KG$, and $UW_\Diamond$ for $KG_\Diamond$:
\[
  UW_\Box\colon  \Box(\neg \neg x) \rightarrow \neg \neg \Box x, \qquad UW_\Diamond\colon (\Diamond x \rightarrow \Diamond y) \rightarrow ((\neg \Diamond y \vee \Diamond(x \rightarrow y))).
\]

Interestingly enough, it was first proven via proof-theoretic methods that $KG_\Box$ and $KG_\Diamond$ are decidable and PSPACE-complete \cite{MeOl11}, and later on, that the full $MG$ and $KG$ logics are also decidable \cite{CaMeRo17} via an hybrid semantics.

Some results addressing partial witnessing conditions have also been presented in the literature. For instance, the extension $(MG/KG)_\Box + UW_\Box$ has been proven to be complete with respect to the class of $\langle \Box, 0\rangle$-witnessed models \cite[p.~198]{CaRo10}.\footnote{Namely, models in which $e(v,\Box \varphi)=0$ implies there is a world $u$ such that $e(u,\varphi)=0$ and $R(v,u)>0$.} This illustrates how relatively simple syntactic principles can enforce partial witnessing.

Despite these advances, several fundamental questions remain open, as we highlighted in the introduction. It was proposed (and remains) as an open question in \cite{CaRo10} whether the logic $(MG/KG)_\Box + UW_\Box$ enjoys the FMP (namely, whether it it is complete with respect to $\class{KG}^\omega$), and it is natural to address the analogous question for the logic $KG_\Diamond + UW_\Diamond$. Furthermore, and in tight relation to the previous questions, remains the problem of providing systematic axiomatizations for the various witnessed logics.


In a related line of research, it is natural to ask whether some of the logics mentioned above admit \emph{partial} witnessing conditions in a meaningful sense -for instance, being witnessed only for $\Diamond$, only for $\Box$, or satisfying weaker “quasi-witnessed” requirements. 

In the rest of the paper we will address the previous questions. First, in Section \ref{sec:failwit} by answering negatively to the open questions concerning the witnessing completeness of the natural $(MG/KG)_\Box + UW_\Box$ and $KG_\Diamond + UW_\Diamond$ logics. In Section \ref{sec:partialWitnessing} we will show several partial witnessing conditions that apply to $\class{MG}$ and $\class{KG}$. Lastly, and relying in the insights obtained in the previous sections, we will propose axiomatic systems and prove their completeness with respect to $\class{MG}^\omega$ and $\class{KG}^\omega$, also working in the mono-modal frameworks.

\section{On the failure of completeness with respect to finite G\"odel Kripke models}\label{sec:failwit}

In this section, we establish two negative results concerning completeness with respect to finite G\"odel–Kripke models. In particular, we show that the extensions of $KG_\Box$ and $KG_\Diamond$ respectively by the formulas $UW_\Box$ and $UW_\Diamond$, proposed in \cite{CaRo10} and \cite{MeOl09} respectively, still fail to achieve completeness over the class of finite G\"odel–Kripke models. This settles a problem that has remained open since 2010, resolving the questions posed in \cite{CaRo10} and \cite{MeOl09}.

To this end, we introduce new formulas that are valid in all finite G\"odel–Kripke models, yet not derivable in the corresponding extended logics. For clarity, we recall that in the standard G\"odel algebra, for any $x,y\in [0,1]$,
\[(x \rightarrow y) \rightarrow y \coloneqq \begin{cases} 1 &\hbox{ iff } y < x \text{ or } y = 1,\\ y &\hbox{ otherwise. } \end{cases}\]

Focusing on the $\Box$-fragment, it is not hard to pinpoint where the formula $UW_\Box$ fails to adequately capture unwitnessing phenomena: namely, at intermediate truth values distinct from $0$. This observation suggests the need for a formula forcing the fact that, if the values of two formulas differ at every successor world, then necessarily also  their $\Box$-evaluations coincide. The following lemma formalizes this intuition.

\begin{lemma}\label{lem:NotWitBox}
$(MG/KG)_\Box + \Box \neg \neg x \rightarrow \neg \neg \Box x$  
is not complete with respect to witnessed models.
\end{lemma}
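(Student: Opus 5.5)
The plan is to exhibit a single formula $\varphi_0$ in the language of $Fm_\Box$ that is valid in every witnessed model, and then show it is not derivable in $(MG/KG)_\Box + UW_\Box$. For the second part I will not argue syntactically. I will use the completeness of $(MG/KG)_\Box + UW_\Box$ with respect to $\langle \Box, 0\rangle$-witnessed models from \cite[p.~198]{CaRo10}, recalled in the preliminaries. Under that result it suffices to produce a crisp $\langle\Box,0\rangle$-witnessed model in which $\varphi_0$ fails at some world.

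Following the intuition stated before the lemma, the formula should force a strict inequality between the arguments at every successor. Then, in a witnessed model, strictness is transferred to the boxed values, whereas an infimum may collapse it. I propose
\[
\varphi_0\colon\ \Box(y \rightarrow x) \wedge \Box((x\rightarrow y)\rightarrow y) \rightarrow ((\Box x \rightarrow \Box y)\rightarrow \Box y).
\]
By the displayed computation of $(x\rightarrow y)\rightarrow y$, the antecedent equals $1$ at $v$ iff every $R$-successor $w$ (in the crisp case) satisfies $e(w,y)\le e(w,x)$ and, moreover, $e(w,y)<e(w,x)$ or $e(w,y)=1$. The consequent equals $1$ iff $\Box y<\Box x$ or $\Box y = 1$.

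For validity in witnessed models I argue as follows. By Gödel semantics it suffices to treat the case where the antecedent takes value $1$; the graded case reduces to this by the usual prelinearity argument, or one can restrict to checking validity as a theorem. Let $u$ witness $\Box x$, so that $\Box x = e(u,x)$ (or $R(v,u)\rightarrow e(u,x)$ in the valued case). If $e(u,y)=1$, then $e(u,x)=1$, so $\Box x=1$ and the consequent holds trivially. Otherwise $\Box y \le e(u,y) < e(u,x) = \Box x$. The valued-$R$ case needs a little care with the factor $R(v,u)\rightarrow\cdot$, and this is the step I expect to require the most checking. If it becomes messy, I will simply state and verify the lemma for crisp witnessed models, which is enough for non-completeness of both $MG_\Box$ and $KG_\Box$, since $MG_\Box=KG_\Box$.

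For the countermodel, take a crisp model with a root $r$ related to worlds $w_n$, $n\ge 1$, each with no successors. Set $e(w_n,x)=\tfrac12+\tfrac1n$, $e(w_n,y)=\tfrac12+\tfrac1{n+1}$, and set all other variables to $1$ at the $w_n$. Then the antecedent is $1$ at $r$, while $\Box x=\Box y=\tfrac12$, so $\varphi_0$ takes value $\tfrac12$ at $r$. It remains to check $\langle\Box,0\rangle$-witnessing. At the leaves this is trivial, since all boxes there equal $1$. At $r$, an induction on formulas shows that every formula takes values in $(\tfrac12,1]\cup\{0\}$ at the $w_n$, because Gödel implication returns $1$ or its consequent and $\neg$ returns $0$ or $1$. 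Hence an infimum over the $w_n$ equals $0$ only if $0$ is attained at some $w_n$. Consequently $\varphi_0$ is not derivable, and the lemma follows.
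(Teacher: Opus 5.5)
This is essentially the paper's proof. Your $\varphi_0$ is equivalent to the paper's formula $W_\Box$, since $\Box$ distributes over $\wedge$. Your countermodel is the same infinite fan, and your invariant $\{0\}\cup(\tfrac12,1]$ at the leaves makes explicit the $\langle\Box,0\rangle$-witnessing that the paper calls trivial. Non-derivability then follows from the same completeness result for $\langle\Box,0\rangle$-witnessed models.

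Two small repairs are needed. First, index the leaves from $n\ge 2$, since as written $e(w_1,x)=\tfrac32\notin[0,1]$. Second, the reduction of the graded case to antecedent value $1$ is not a ``prelinearity'' fact. You can fix this in either of two ways:
\begin{itemize}
\item State the counterexample as the one-premise entailment from the antecedent to the consequent. The paper's notion of completeness is consequence-based, so the value-$1$ case suffices.
\item Check the graded case directly, as the paper does. If $\Box x\le\Box y<1$ and $u$ witnesses $\Box x$, then at $u$ either $y\rightarrow x$ takes value $e(u,x)=\Box x$, or $e(u,x)=e(u,y)=\Box y$ and $(x\rightarrow y)\rightarrow y$ takes value $\Box y$. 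In both cases the antecedent is at most $\Box y$, which is the value of the consequent.
\end{itemize}
The direct check also works for valued $R$. Since $\Box x<1$ forces $R(v,u)>e(u,x)$, the same case split goes through, so you do not need the crisp fallback or the appeal to $MG_\Box=KG_\Box$.
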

\begin{proof}
We will denote by $KG_\Diamond^\star$ to the logic $KG_\Box + \Box \neg \neg x \rightarrow \neg \neg \Box x$. 
Consider the formula 
\begin{equation}\label{eq:NotWitBox}
\Box (((x \rightarrow y) \rightarrow y) \wedge (y \rightarrow x)) \rightarrow ((\Box x \rightarrow \Box y) \rightarrow \Box y).    
\end{equation}


Let us first check that this formula is sound with respect to witnessed models. Since $MG_\Box = KG_\Box$, we can simply do this for crisp models. 
 Take a crisp witnessed model $\mathfrak{M}$ and a world $u$ in it.
The only non trivial case to check is when $e(u, \Box x) \leq e(u, \Box y) < 1$, so the value of the right side of the implication (\ref{eq:NotWitBox}) equals $e(u, \Box y)$ and strictly less than $1$. For $v$ such that $Ruv$ and $e(u, \Box x) = e(v,x)$ (which exists because the model is witnessed), we have that $e(v,x) \leq e(u, \Box y) \leq e(v,y)$. Let us consider two cases. If $e(v,x) < e(v,y)$, it follows that $e(v, (((x \rightarrow y) \rightarrow y) \wedge (y \rightarrow x)) = e(v, y \wedge x) = e(v,x)$. Since we know that $e(v, x) = e(u, \Box x) \leq  e(u, \Box y)$, this makes the implication (\ref{eq:NotWitBox}) true. 
For the second case, suppose that $e(v,x) = e(v,y)$, and observe this implies that $e(u, \Box y) = e(v,y)$ too. 
Then, $e(v, (((x \rightarrow y) \rightarrow y) \wedge (y \rightarrow x)) = e(v, y \wedge x) = e(v,y)$, satisfying the implication (\ref{eq:NotWitBox}) using the previous observation.

We will now produce a $KG_\Box^\star$ model that falsifies the previous formula. 

Consider the model with universe $u \cup \{v_i\colon i \in \omega, i > 1\}$, $Ruv_i$ for all $i$, and $e(v_i, x) = \frac{1}{2}+\frac{1}{i}$, $e(v_i, y) = \frac{1}{2}$ (the values taken at $u$ are irrelevant). It is clear that $e(v_i, x) > e(v_i, y)$ for any $i$, so $e(v_i, (((x \rightarrow y) \rightarrow y) \wedge (y \rightarrow x))= 1$ for any $i$, and therefore, $e(u, \Box ((((x \rightarrow y) \rightarrow y) \wedge (y \rightarrow x)) = 1$. On the other hand, since $e(u, \Box x) = e(u, \Box y) = \frac{1}{2}$, we get  $e(u, (\Box x \rightarrow \Box y) \rightarrow \Box y) = \frac{1}{2}$, which shows that the formula (\ref{eq:NotWitBox}) is not valid in the model. 

On the other hand, this model is a $KG_\Box^\star$ model because it is trivially $\langle \Box, 0 \rangle$-witnessed\footnote{A model is $\langle \Box, 0 \rangle$-witnessed if $e(u, \Box \theta) = 0$ implies the existence of $v \in W$ such that $e(v, \theta) = 0 < Ruv$.}. Since it is known that $MG_\Box^\star$ is complete with respect to $0$-$\Box$-witnessed models \cite[p.~198]{CaRo10}, this concludes the proof.

\end{proof}

The previous implies that also $KG + \Box \neg \neg x \rightarrow \neg \neg \Box x$ and $MG + \neg \neg x \rightarrow \neg \neg \Box x$  are not complete w.r.t. ($\Box$) witnessed models.

The $\Diamond$-case requires a more delicate analysis, both in understanding why $UW_\Diamond$ fails to fully capture unwitnessed phenomena, and in identifying a formula that remains sound over finite models. Unlike the $\Box$-case, the formula $UW_\Diamond$ already addresses unwitnessing at intermediate truth values. However, a closer inspection reveals that it does so in a fundamentally insufficient way. Intuitively, $UW_\Diamond$ enforces that whenever $\Diamond x \leq \Diamond y$, there exists a world at which $x \leq y$. Yet this condition is too weak: it may be satisfied by worlds whose truth values play no essential role in determining the value of the $\Diamond$-formulas themselves.

This observation suggests the need for a more refined separation. In particular, by introducing a third variable $z$, one can isolate the values of $x$ and $y$ that are genuinely relevant for the evaluation of $\Diamond x$ and $\Diamond y$, distinguishing them from those that are not. Namely, $z$ allows us to separate the ``useful" values in computing the modal formulas. Using it, it is possible to state that the condition of $W_\Diamond$ is met, but specifically in the worlds really ``relevant" for $\Diamond x, \Diamond y$. The resulting formula can then be falsified in a model that still satisfies $UW_\Diamond$. The following lemma formalizes this idea.

\begin{lemma}\label{lem:NotWitDiamond}
$KG_\Diamond + (\Diamond x \rightarrow \Diamond y) \rightarrow (\neg \Diamond y \vee \Diamond(x \rightarrow y))$ is not complete with respect to witnessed models. In particular, it is not complete with respect to $\class{KG}_\Diamond$ witnessed models.
\end{lemma}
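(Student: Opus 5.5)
The plan follows the proof of Lemma~\ref{lem:NotWitBox}. First I exhibit a formula in $Fm_\Diamond$, using a third variable $z$, that is valid in every crisp witnessed model. Then I build a crisp but unwitnessed model that validates $KG_\Diamond + UW_\Diamond$ and refutes that formula. The candidate formula is
\begin{equation*}
\Phi\colon\ (\Diamond x \rightarrow \Diamond y) \rightarrow \bigl(\neg \Diamond y \vee (\Diamond y \rightarrow \Diamond z) \vee \Diamond((x \rightarrow y) \wedge ((y \rightarrow z) \rightarrow z))\bigr).
\end{equation*}

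\textbf{Soundness in witnessed models.} Fix a crisp witnessed model, a world $u$, and assume $e(u,\Diamond x)\leq e(u,\Diamond y)$. If $e(u,\Diamond y)=0$, the first disjunct of $\Phi$ is $1$. If $e(u,\Diamond y)\leq e(u,\Diamond z)$, the second disjunct is $1$. Otherwise $0<e(u,\Diamond z)<e(u,\Diamond y)$. Let $v$ with $Ruv$ be a witness of $\Diamond y$, so $e(v,y)=e(u,\Diamond y)$. Then
\[e(v,x)\leq e(u,\Diamond x)\leq e(v,y), \qquad e(v,z)\leq e(u,\Diamond z)<e(v,y).\]
The first chain gives $e(v,x\rightarrow y)=1$. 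The second gives $e(v,y\rightarrow z)=e(v,z)$, hence $e(v,(y\rightarrow z)\rightarrow z)=1$. So the third disjunct is $1$, and $\Phi$ holds at $u$.

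\textbf{Countermodel.} The idea is to make $\Diamond x$ and $\Diamond y$ be non-attained suprema, with $x>y$ at every world that contributes to them. The only world where $x\leq y$ should be an ``irrelevant'' world, which $z$ separates out. Take a root $u$, dead-end successors $v_i$ for $i\geq 1$, and one dead-end successor $w$. Set
\[e(v_i,y)=\tfrac12-\tfrac1{2i},\qquad e(v_i,x)=\tfrac12-\tfrac1{2i+1},\qquad e(v_i,z)=\tfrac18,\qquad e(w,x)=e(w,y)=0,\quad e(w,z)=\tfrac18.\]
Then $y<x$ at every $v_i$ and $\Diamond x=\Diamond y=\tfrac12$ at $u$, neither attained. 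Also $\Diamond z=\tfrac18$ and $\neg\Diamond y=0$, so the first two disjuncts take values $0$ and $\tfrac18$.

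For the third disjunct: at each $v_i$ we have $y>z$, so $(y\rightarrow z)\rightarrow z$ has value $1$, while $x\rightarrow y$ has value $y_i$. The conjunction is therefore $y_i$. At $w$ the conjunction is $\tfrac18$. Its supremum is $\tfrac12$. Hence $\Phi$ takes value $\tfrac12$ at $u$.

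\textbf{Validity of the logic in the model (main obstacle).} The model is crisp, so $KG_\Diamond$ holds in it. The hard part is checking every substitution instance $\varphi,\psi$ of $UW_\Diamond$ at $u$; at the dead ends it is trivial, since $\Diamond$ is $0$ there. I would first use that each $v_i$ and $w$ are dead ends, so every modal subformula is constant there. Consequently, at each $v_i$ the valuation of the variables realises the same order type $0<\tfrac18<y_i<x_i<1$. It follows that any formula $\theta$ takes at $v_i$ a value $\tau_\theta(i)\in\{0,1,\tfrac18,y_i,x_i\}$, where the choice among these five \emph{positions} depends only on $\theta$, not on $i$. The same holds at $w$, with values in $\{0,\tfrac18,1\}$.

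Then I argue by cases on the positions of $\varphi$ and $\psi$, assuming $0<e(u,\Diamond\varphi)\leq e(u,\Diamond\psi)$; the goal is $e(u,\Diamond(\varphi\rightarrow\psi))=1$.
\begin{itemize}
\item If the supremum $\Diamond\psi$ is attained at some world, the witness argument from the soundness part gives a world with $\varphi\rightarrow\psi=1$.
\item Otherwise $\psi$ sits at position $y_i$ or $x_i$ on the $v_i$, and $\Diamond\psi=\tfrac12$. Then $\Diamond\varphi\leq\tfrac12$ forces $\varphi$ at the $v_i$ into position $0$, $\tfrac18$, $y_i$ or $x_i$. In every combination except ($\varphi$ at $x_i$, $\psi$ at $y_i$) one gets $\varphi\leq\psi$ at each $v_i$, so $\Diamond(\varphi\rightarrow\psi)=1$.
\end{itemize}
The remaining combination is the delicate one. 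Here I expect to use the world $w$, exactly as for $x,y$ themselves: show that $\varphi\leq\psi$ at $w$. If that fails for some formula pair, I would adjust the model, for instance by letting $w$ copy a shifted valuation, so that $w$ always realises the needed inequality.
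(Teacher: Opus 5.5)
Your formula $\Phi$ and its soundness argument are fine. One small omission: you only treat the case where the antecedent equals $1$. When $e(u,\Diamond x)>e(u,\Diamond y)$ you also need the consequent to be at least $e(u,\Diamond y)$, and the same witness $v$ of $\Diamond y$ gives this.

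The genuine gap is the countermodel: it is not a model of $UW_\Diamond$, and the ``delicate'' case you postpone really fails. Substitute $x\vee z$ for $x$ and keep $y$.
\begin{itemize}
\item At $v_i$ with $i\geq 2$: $x\vee z=x_i>y_i$.
\item At $w$: $x\vee z=\tfrac18>0=y$.
\item At $v_1$: $x\vee z=\tfrac16>0=y$. Note that $y_1=0$, so $v_1$ does not have the order type $0<\tfrac18<y_i<x_i$ you claim.
\end{itemize}
Hence $e(u,\Diamond(x\vee z))=e(u,\Diamond y)=\tfrac12$ and $e(u,\neg\Diamond y)=0$. Also $(x\vee z)\rightarrow y$ takes the values $y_i$, $0$, $0$, so $e(u,\Diamond((x\vee z)\rightarrow y))=\tfrac12$. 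The instance therefore evaluates to $\tfrac12$ at $u$.

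The problem is structural. At your ``irrelevant'' world, $z$ is positive while $x,y$ are $0$. Substitutions can therefore use $z$ to lift $\varphi$ above $\psi$ there, which is exactly the separating role you wanted $z$ to play in $\Phi$. No refinement of the case analysis can rescue this choice of $w$; the model itself has to change.

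The missing idea is to make the irrelevant world collapse the variables to one common nonzero value, so that no formula can separate it from the $v_i$ in the wrong direction. Drop $v_1$ (index from $i\geq 2$) and set $e(w,x)=e(w,y)=e(w,z)=a$ for some $a\in(0,\tfrac12)$; the paper's world $v_1$ with value $\tfrac13$ plays this role.

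$\Phi$ is still refuted: the third disjunct is $y_i$ at $v_i$ and $a$ at $w$, so its $\Diamond$ is $\tfrac12$.

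Now the delicate case, with $\varphi$ at position $x_i$ and $\psi$ at position $y_i$:
\begin{itemize}
\item $\psi$ is nonzero on the $v_i$.
\item The map $t\mapsto\neg\neg t$ is a homomorphism of $[0,1]_G$ onto $\{0,1\}$. All variables are positive both at the $v_i$ and at $w$, and modal subformulas are constant at dead ends. So $e(w,\psi)\neq 0$.
\item Since $\Diamond\psi$ is not attained, $e(w,\psi)\neq 1$. Hence $e(w,\psi)=a$.
\item If $e(w,\varphi)>a$, then $e(w,\varphi)=1$ and $e(u,\Diamond\varphi)=1>\tfrac12$, contradicting $e(u,\Diamond\varphi)\leq e(u,\Diamond\psi)$.
\item So $e(w,\varphi)\leq e(w,\psi)$, and $e(u,\Diamond(\varphi\rightarrow\psi))=1$.
\end{itemize}
With this change your position-based case analysis closes the argument. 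It is then essentially the paper's proof, which reaches the same conclusion through G\"odel disjunctive normal forms rather than the $\neg\neg$ homomorphism.
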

\begin{proof}

We will denote by $KG_\Diamond^\star$ to the logic $KG_\Diamond + (\Diamond x \rightarrow \Diamond y) \rightarrow (\neg \Diamond y \vee \Diamond(x \rightarrow y))$. 
Consider the formula 
\begin{equation}\label{eq:NotWitDiamond}
( (\Diamond x \rightarrow \Diamond y) \wedge ((\Diamond y \rightarrow \Diamond z) \rightarrow \Diamond z)) \rightarrow (\Diamond z \vee \Diamond ((x \rightarrow y) \wedge ((y \rightarrow z) \rightarrow z))).
\end{equation}
Let us first check that it is sound with respect to $\class{KG}_\Diamond$ witnessed models. Take a witnessed model $\mathfrak{M}$ and a world $u$ in it. We will check it satisfies the implication (\ref{eq:NotWitDiamond}) above, to which we will refer, in order to shorten it, by $A \rightarrow B$. Consider the different possible cases:
\begin{itemize}
\item If $e(u, \Diamond y) \leq e(u, \Diamond z)$, necessarily $e(u, A) \leq e(u, \Diamond z)$. Since $e(u, B) \geq e(u, \Diamond z)$, the case follows immediately.
\item Assume that ($*$): $e(u, \Diamond y) > e(u, \Diamond z)$. Then, $e(u, ((\Diamond y \rightarrow \Diamond z) \rightarrow \Diamond z)) = 1$. 
\begin{itemize}
\item If $(*.1)$: $e(u, \Diamond x) \leq e(u, \Diamond y)$, we have that $e(u, A) = 1$. Consider $v$ with $Ruv$ such that $e(u, \Diamond y) = e(v, y)$ (which exists because the model is witnessed). It follows from $(*.1)$ that $e(v, x) \leq e(v, y)$, and from ($*$), that $e(v,z) < e(v,y)$. This makes $e(v, (x \rightarrow y) \wedge ((y \rightarrow z) \rightarrow z)) = 1$, and so, $e(u, \Diamond ((x \rightarrow y) \wedge ((y \rightarrow z) \rightarrow z))) = 1$ proving the subcase.
\item Otherwise, if $(*.2)$: $e(u, \Diamond x) > e(u, \Diamond y)$, we have that $e(u, A) = e(u, \Diamond y)$. For $v$ with $Ruv$ such that $e(u, \Diamond y) = e(v, y)$ (as in the previous case),  we still have from (*) that $e(v,z) < e(v,y)$, so $e(v, ((y \rightarrow z) \rightarrow z)) = 1$. If $e(v, x) \leq e(v, y)$, we get that $e(v, (x \rightarrow y) \wedge ((y \rightarrow z) \rightarrow z)) = 1$, proving the subcase. On the other hand, if $e(v, x) > e(v,y)$, then $e(v, (x \rightarrow y) \wedge ((y \rightarrow z) \rightarrow z)) = e(v,y) = e(u, \Diamond y)$. Hence, $e(u, B) \geq e(u, \Diamond y)$, concluding that $e(u, A) \leq e(u,B)$. 
\end{itemize}
\end{itemize}

On the other hand, consider the model with universe
$\{u \cup \{v_i\colon i \in \omega\}\}, Ruv_i$ for all $i$ and valued as follows (again, values at $u$ are irrelevant):
\begin{align*}
e(v_1, z) = \frac{1}{3},&\qquad e(v_1, x) = \frac{1}{3},&\qquad e(v_1, y) = \frac{1}{3},&&  \\
e(v_i,z) = \frac{1}{3},&\qquad e(v_i, x) = \frac{1}{2}, &\qquad e(v_i, y) = \frac{1}{2} - \frac{1}{i+5}, && \text{ for }i > 1
\end{align*}

It is clear that $e(u, \Diamond z) = \frac{1}{3}$ and $e(u, \Diamond x) = e(u, \Diamond y ) = \frac{1}{2}$. Hence, $e(u, (\Diamond x \rightarrow \Diamond y) \wedge ((\Diamond y \rightarrow \Diamond z) \rightarrow \Diamond z)) = 1$. However, 
it is easy to see that $e(v_1, ((x \rightarrow y) \wedge ((y \rightarrow z) \rightarrow z))) = 1 \wedge \frac{1}{3} = \frac{1}{3} < 1$, and $e(v_i, ((x \rightarrow y) \wedge ((y \rightarrow z) \rightarrow z))) = \frac{1}{2} - \frac{1}{i+5} \wedge 1 = \frac{1}{2} - \frac{1}{i+5} < 1$, so $e(u,  (\Diamond z \vee \Diamond ((x \rightarrow y) \wedge ((y \rightarrow z) \rightarrow z))  = \frac{1}{3} \vee \frac{1}{2} = \frac{1}{2} < 1.$ This proves that the model is not a model of the formula (\ref{eq:NotWitDiamond}).

Showing that this structure is a model of $KG^*$ is more involved than in the previous case. On the one hand, no completeness result is currently known for the new logic. On the other hand, the formula~(\ref{eq:NotWitDiamond}) involves three variables (whereas the formula added to $KG_\Diamond$ involves only two), while in the $\Box$ case the formula~(\ref{eq:NotWitBox}) involves two variables, and the formula added to $KG_\Box$ involves only a single variable.

Our goal is to prove that for any formulas $\varphi, \psi$ in variables $x,y,z$, it holds that $e(u, \Diamond \varphi \rightarrow \Diamond \psi) \leq e(u, \neg \Diamond \psi) \vee e(u, \Diamond (\varphi \rightarrow \psi))$\footnote{We aim so prove the formula holds for any possible substitution}. While theoretically possible to do this by brute force (since G\"odel algebras are locally finite), it is known that the number of different (propositional) G\"odel formulas in three variables is greater than $10^{11}$ \cite{Ho69b}. Hence, we adopt a different approach. To prove our claim, we make use of the normal form for G\"odel propositional formulas (see, e.g., \cite{ZhaZha07}), which states that every G\"odel formula is equivalent to one in disjunctive normal form (DNF), i.e., a disjunction of conjunctions of basic terms of the form $a$, $a \rightarrow b$, and $(a \rightarrow b) \rightarrow b$, where $a,b$ are variables, together with the constant $\bot$.

Reasoning in such a way is enough -namely, using only propositional formulas- because formulas $\varphi, \psi$ will be only evaluated in worlds of the form $v_i$, and these do not have any successor. Thus, any subformula in them beginning with a modality can be substituted by $\top$ or $\bot$ respectively, and $\varphi$ and $\psi$ can be taken to be propositional formulas.

In order for formulas $\varphi, \psi$ to satisfy
\begin{equation}\label{eq:inequality}
    e(u, \Diamond \varphi \rightarrow \Diamond \psi) > e(u, \neg \Diamond \psi) \vee e(u, \Diamond (\varphi \rightarrow \psi))
\end{equation}
a first necessary condition is that $e(u, \neg \Diamond \psi) < 1$. Hence, $e(u, \Diamond \psi) > 0$, and so for some successor the value of $\psi$ is greater than $0$. It is also necessary that $e(u, \Diamond \varphi \rightarrow \Diamond \psi) > e(u, \Diamond (\varphi \rightarrow \psi))$. 
According to this, 
\begin{itemize}
    \item[(1)] $\Diamond \psi$ must be unwitnessed in $u$.
\end{itemize}
Otherwise, if $e(u, \Diamond \psi) = e(v, \psi)$ for some $v$, it would hold that $e(u, \Diamond \varphi \rightarrow \Diamond \psi) \leq e(v, \varphi \rightarrow \psi) \leq e(u, \Diamond (\varphi \rightarrow \psi))$).

As we pointed out above, any formula beginning with a modality is evaluated in any $v_i$ to $\bot$ or $\top$, so we can assume $\psi \coloneqq \bigvee_{i \in I} \psi_i$ (as in a G\"odel DNF). By the distributivity of $\Diamond$ and $\vee$, (1) implies that $e(u, \Diamond \psi) = e(u, \Diamond \psi_i)$ for some $i\in I$, and this latter one is unwitnessed. Also, we can assume that $\psi_i$ is a conjunction of basic G\"odel terms. Summing up, regarding $\psi$, we can limit our search to formulas with the structure $\bigwedge_{j \in J} t_j$, for $t_j$ basic G\"odel terms (in $x,y,z$) whose diamond is unwitnessed in $u$.  
 
Furthermore, to satisfy Equation (\ref{eq:inequality}) the following additional conditions must be met:
\begin{enumerate}
\item[(2)] $e(w, \varphi) > e(w, \psi)$ at any $w$ with $Ruw$ (otherwise, $e(u, \Diamond(\varphi \rightarrow \psi)) = 1$). This implies that, in particular, $e(w, \varphi) > e(w, \psi_i)$ for $\psi_i$ the formula above, and also, that  $e(u, \Diamond (\varphi \rightarrow \psi)) = e(u, \Diamond \psi) = e(u, \Diamond \psi_i)$, and
\item[(3)] $e(u, \Diamond \varphi) \leq e(u, \Diamond \psi) = e(u, \Diamond \psi_i)$.
\end{enumerate}

{\small 

\begin{table}[]
\makegapedcells
\begin{tabular}{llll | c llll}
Formula & $v_1$ & $v_i$ & equ. to & & Formula &  $v_1$ &  $v_i$ & equ. to  \\
  $  y   $ & $\frac{1}{3}$     &     $ \frac{1}{2}-\frac{1}{i+5} $&             & &$  \neg y$  & $0  $            &   $0$             &      $\bot$          \\
$\neg \neg y$&    $1$  &  $1$  &   $ \top$  & &     $y \rightarrow x$    &  $1$ &  $1$      & $\top$  \\
   $x$& $\frac{1}{3}$  &  $\frac{1}{2}$  &   & &    $\neg x$      &   $0$  &   $0$ &  $\bot$  \\ 
     $\neg \neg x$& $1$  &  $1$  &  $\top$ & &    $\neg \neg z$& $1$  &  $1$  &  $\top$ \\    
$z$& $\frac{1}{3}$  &  $\frac{1}{3}$  &   & &    $\neg z$      &   $0$  &   $0$ &  $\bot$\\
 $y \rightarrow z$& $1$  &  $\frac{1}{3}$  &      & &    $z \rightarrow y$     &   $1$  &   $1$ &   $\top$   \\
 $x \rightarrow z$ & $1$ & $\frac{1}{3}$ & $y \rightarrow z$    & &    $z \rightarrow x$     &   $1$  &   $1$ & $\top$   \\
  $x \rightarrow y$& $1$  &  $\frac{1}{2}-\frac{1}{i+5}$  &      & &    $(y \rightarrow x) \rightarrow x$     &   $\frac{1}{3}$  &   $\frac{1}{2}$ &  $x$  \\
 $(x \rightarrow y)\rightarrow y$& $\frac{1}{3}$  &  $1$  &      & &    $(z \rightarrow x) \rightarrow x$     &   $\frac{1}{3}$  &   $\frac{1}{2}$ & $x$   \\
 $(x \rightarrow z)\rightarrow z$& $\frac{1}{3}$  &  $1$  & $(x \rightarrow y)\rightarrow y$     &  &      &     &    &  \\ 
  $(z \rightarrow y)\rightarrow y$& $\frac{1}{3}$  &  $\frac{1}{2}-\frac{1}{i+5}$  &  $y$  & &    $(y \rightarrow z) \rightarrow z$     &   $\frac{1}{3}$  &   $1$ & $(x \rightarrow y) \rightarrow y$ \\ 
 
\end{tabular}\caption{Values of all basic G\"odel terms in $x,y,z$ in the model (Lemma \ref{lem:NotWitDiamond}.) }\label{table:formulas}
\end{table}
}

From Table \ref{table:formulas}, we can see that the only unwitnessed basic G\"odel term is $y$ (or is equivalent to $y$). By doing a similar exploration to the one in Table \ref{table:formulas}\footnote{Relying in the equivalences, to reduce the search.}, we can check that, in fact, there are no conjunctions of the basic G\"odel terms whose $\Diamond$ is unwitnessed in $u$, different from $y$ itself. 

Therefore, we may assume, without loss of generality, that the only possible formula $\psi$ that could make $e(u, \Diamond \varphi \rightarrow \Diamond \psi) > e(u, \neg \Diamond \psi) \vee e(u, \Diamond (\varphi \rightarrow \psi))$ is $y$. It remains to determine whether there exists a formula $\varphi$ that satisfies conditions $(2)$ and $(3)$ above (reading $\psi_i$ as $y$). 

In order to satisfy condition $(2)$, in particular, it is necessary that $e(v_1, \varphi) > e(v_1, y) =  \frac{1}{3}$. Therefore, necessarily, $e(v_1,\varphi) = 1$ (since there are no more possible values allowed in $v_1$). However, this forces $e(u, \Diamond \varphi) = 1$, which makes it impossible to satisfy condition (3) (i.e., $e(u, \Diamond \varphi) \leq e(u, \Diamond y) = \frac{1}{2}$), concluding the proof.

\end{proof}

\begin{remark}
    The formulas $(\Diamond x \rightarrow \Diamond y) \rightarrow (\neg \Diamond y \vee \Diamond(x\rightarrow y)$ and $((\Diamond x \rightarrow \Diamond y) \wedge ((\Diamond y \rightarrow \Diamond z) \rightarrow \Diamond z)) \rightarrow (\Diamond z \vee \Diamond ((x \rightarrow y) \wedge ((y \rightarrow z) \rightarrow z))$ are not valid over witnessed models with valued accessibility.  Consider the valued model $\langle W, R, e \rangle$ with $W = \{ v,w \}$, $R(v,y) = \frac{1}{2}$, and $e(w, x) = 1$, $e(w, y) = \frac{1}{2}$, $e(w, z) = 0$, so $e(v, \Diamond x) = \frac{1}{2}, e(v, \Diamond y) = \frac{1}{2}, e(v, \Diamond z) = 0$ and $e(v, \Diamond ((x \rightarrow y) \wedge ((y \rightarrow z) \rightarrow z) = \frac{1}{2}$. Therefore, in both cases, the premises of the scheme evaluate to $1$, but the consequence is $\frac{1}{2}$.\footnote{This counterexample is essentially the same as the one given in \cite[Pag. 202]{CaRo10}.}
\end{remark}

%


\section{Completeness with respect to partial witnessing conditions}\label{sec:partialWitnessing}

In this section we establish completeness results with respect to classes of models satisfying partial witnessing conditions for the $\Diamond$ modality. These results provide a refined semantic perspective that will play a central role in the subsequent axiomatization theorems. 

\begin{definition}
We say that a model is $\Diamond$-witnessed whenever for any formula $\Diamond \varphi$ and any $v \in W$, $e(v, \Diamond \varphi) = R(v,v_{\Diamond \varphi}) \wedge e(v_{\Diamond \varphi}, \varphi)$ for some $v_{\Diamond \varphi}$.\\
\noindent
We say that a \textit{crisp} model $\mathfrak{M}$ is $\langle \Diamond,1\rangle$-witnessed whenever, for any $\Diamond \varphi$ and any $v \in W$, if $e(v, \Diamond \varphi) = 1$ then there exists some $w \in W$ such that $Rvw$ and $e(w,\varphi) = 1$.
\end{definition}

We will prove that $MG$ is complete with respect to $\Diamond$-witnessed models (Theorem \ref{th:valueddiamondcompleteness}), and that $KG$ is complete with respect to $\langle \Diamond,1\rangle$-witnessed models (Theorem \ref{th:topwitnessed}).

Although these results arise naturally as intermediate steps toward the main axiomatization theorems of the following section, they are of independent conceptual significance. They fundamentally modify the semantic behavior of the $\Diamond$ modality, replacing a definition based on potentially unattained suprema with one grounded in effective witnesses. In the case of $MG$, the value of $\Diamond \varphi$ transitions from a possibly unattainable supremum to an actual maximum realized at some world. In the case of $KG$, we show that the top value $1$ is always attained. In both settings, this yields a substantial simplification in the semantic analysis of the logic and its extensions.

A first consequence of the theorems below is that introducing $\Diamond$-formulas as premises in a derivation guarantees the existence of worlds where these formulas are realized. This reflects a substantial change in the interpretation of the semantics. A particular example of this settles an open problem raised in \cite[Remark pag. 49]{CaRo15}: if the minimal bi-modal G\"odel logic is extended with the axiom $\Diamond \top$, then the underlying frame satisfies seriality, i.e., $\forall v \exists u : R(v,u) = 1$. 

Similarly, a direct adaptation of our methods yields analogous results for $K45$ and $KD45$ G\"odel logics \cite{BoEsGoRo16}. In these systems, the intended semantics is based on a unary accessibility relation $\Pi$, interpreted as a possibility measure, which in the case of $KD45$ is required to be normalized, i.e., $\bigvee_{v \in W} \Pi(v) = 1$. Our results imply completeness with respect to a stronger and more explicit semantics, in which there exists some $v \in W$ such that $\Pi(v) = 1$. Once again, this replaces a potentially unattainable supremum condition with an explicit existential one.

Another, more technical consequence concerns a deeper understanding of the relationship between the completeness results for $MG$ and $KG$. Recall that $KG = MG + {Cr}$ \cite{RoVi20}, where ${Cr}$ is Dunn's axiom. Our results clarify that even if the canonical model of $MG$ is forced to satisfy $Cr$, the resulting structure is not crisp. Indeed, since the canonical model of $MG$ is $\Diamond$-witnessed, crispness would transfer this property to $KG$, which is known not to be the case. It follows that the completeness proof for $KG$ necessarily relies on fundamentally different techniques from those used for $MG$. This distinction had previously been conjectured, but not formally established.

We proceed to prove the aforementioned completeness results.


\begin{theorem}\label{th:valueddiamondcompleteness}
$MG$ is complete with respect to the class of $\Diamond$-witnessed $\class{MG}$ models. In particular, the canonical model of $MG$ from \cite{CaRo15} is $\Diamond$-witnessed. 
\end{theorem}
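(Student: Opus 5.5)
I will work with the canonical model of $MG$ as built in \cite{CaRo15}, and recall its shape first. Worlds are (roughly) $MG$-evaluations $v\colon Fm \to [0,1]$, i.e. homomorphisms from the Lindenbaum algebra into $[0,1]$, restricted to the relevant countable value sets. The valued accessibility is
\[R(v,w) \coloneqq \bigwedge_{\varphi}\big( (v(\Box\varphi)\rightarrow w(\varphi)) \wedge (w(\varphi)\rightarrow v(\Diamond\varphi))\big),\]
with truth lemma $e(v,\varphi)=v(\varphi)$. Completeness of $MG$ with respect to $\class{MG}$ comes from this model. So it suffices to show that for every world $v$ and every formula $\Diamond\varphi$ there is a world $w$ with
\[R(v,w)\wedge w(\varphi) = v(\Diamond\varphi).\]
The inequality $\leq v(\Diamond\varphi)$ is automatic from the definition of $R$, so the task is to produce a single $w$ attaining the value. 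The first assertion of the theorem then follows from the second.

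\textbf{Construction of the witness.} Set $a \coloneqq v(\Diamond\varphi)$. If $a=0$, any world $w$ works: either $R(v,w)=0$, or a world realising $v(\Diamond\bot)=0$ suffices. So assume $a>0$. I want an evaluation $w$ satisfying the following conditions:
\begin{itemize}[label={}]
\item (i) $w(\varphi)\geq a$;
\item (ii) $w(\psi)\geq v(\Box\psi)\wedge a$ for every $\psi$;
\item (iii) $w(\psi)\leq v(\Diamond\psi)$ whenever $w(\psi) < $ the relevant threshold.
\end{itemize}
Concretely, I aim for $w(\chi)\leq v(\Diamond\chi)$ or $w(\chi)\leq a$ for every $\chi$. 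The point is that $R(v,w)\geq a$ only requires each conjunct in the definition of $R$ to be $\geq a$. In G\"odel arithmetic, $x\rightarrow y \geq a$ iff $x\leq y$ or $y\geq a$.

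So I will build $w$ by a truncation trick. Consider the theory
\[T \coloneqq \{\psi : v(\Box\psi)\geq a\}\cup\{\varphi\}\]
together with the "upper bound" constraints $\{\chi \rightarrow \Diamond\text{-bound}\}$. These constraints are expressed through the prime-filter/ordered-evaluation technique of \cite{CaRo15}: one fixes the chain of values of $v$ on $\Diamond$ and $\Box$ formulas above and below $a$, and looks for a G\"odel evaluation of the non-modal skeleton respecting them. The finitary consistency of these constraints is exactly what the axioms give. ${K}_\Diamond$ and ${N}_\Diamond$ give monotonicity and join-preservation of $\Diamond$. ${FS}_1$ gives $v(\Box\psi)\wedge v(\Diamond\varphi)\leq v(\Diamond(\psi\wedge\varphi))$, and ${Z}_\Diamond$ handles the $\neg\neg$ thresholds. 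The argument then reduces, by compactness of G\"odel logic (or via the prelinear prime-theory construction), to showing that every finite fragment is satisfiable with $\varphi$ at level $\geq a$. That finite fragment is essentially a claim of the form
\[v\Big(\Diamond\big(\varphi\wedge\textstyle\bigwedge_k\psi_k\wedge\bigwedge_l(\chi_l\rightarrow\theta_l)\big)\Big)\geq a.\]
It follows from ${FS}_1$ and ${FS}_2$ applied finitely many times.

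\textbf{Main obstacle.} The hard step is passing from "for each finite set of constraints the value $a$ is approximated" to a single $w$ attaining $a$ exactly. This is precisely where an unattained supremum could appear. My plan is to avoid approximation entirely by working with \emph{non-strict} inequalities at level $a$ itself. Since $v(\Diamond\varphi)=a$ exactly, the finite-fragment inequality is $\geq a$, not merely "$>a-\varepsilon$". A compactness argument over constraints of the form "value $\geq a$" and "value $\leq b$" (closed conditions) in the compact space $[0,1]^{Fm}$ then yields a limit evaluation satisfying all of them. This uses Tychonoff: closed conditions, finite intersection property, and G\"odel homomorphism conditions being closed only if we restrict to order-type constraints. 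Since G\"odel operations are not continuous, I expect to replace the topological step by an algebraic one: take a prime filter of the Lindenbaum algebra extending the filter generated by $T$ and avoiding the ideal of upper-bounded formulas, then embed the resulting countable chain into $[0,1]$ preserving the finitely many relevant values $a$ and the $v(\Diamond\chi)$. Checking that this re-embedding keeps $R(v,w)\geq a$ is where I expect most of the work to lie.
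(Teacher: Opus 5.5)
\textbf{There is a genuine gap: the re-embedding step you defer is the whole proof, and you do not show it is possible.}

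Your reduction is right: for $a=v(\Diamond\varphi)>0$, find $w$ with $w(\varphi)\geq a$ and $R(v,w)\geq a$. Your first move is also reasonable. Take a prime theory containing $\{\varphi\}\cup\{\psi : v(\Box\psi)\geq a\}$ and omitting the $\theta$ with $v(\Diamond\theta)<a$. Its existence follows from $N_\Diamond$, ${FS}_1$ and distributivity, as in the paper's Claim.

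The difficulty is the step you leave open, namely that the resulting chain can be re-embedded into $[0,1]$ with $R(v,w)\geq a$. A G\"odel homomorphism from a chain into $[0,1]$ is strictly increasing on the elements it does not send to $1$. So it can give $w(\theta_1)\leq v(\Diamond\theta_1)$ and $w(\theta_2)\geq v(\Box\theta_2)\wedge a$ simultaneously only if the intermediate evaluation $u$ already orders the formulas compatibly. For $v(\Diamond\theta_1)<a$ you need:
\begin{itemize}
\item $u(\theta_1)<u(\theta_2)$ whenever $v(\Diamond\theta_1)<v(\Box\theta_2)$;
\item $u(\theta_1)\leq u(\theta_2)$ whenever $v(\Diamond\theta_1)\leq v(\Box\theta_2)$;
\item $v(\Box\theta)=0$ whenever $u(\theta)=0$.
\end{itemize}
The paper gets these by showing that $\theta_2\to\theta_1$ and $(\theta_1\to\theta_2)\to\theta_1$ also have $\Diamond$-value below $a$, so $u$ cannot send them to $1$. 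The first uses ${FS}_1$. The second uses ${FS}_1$ together with ${FS}_2$, since $v(\Diamond((\theta_1\to\theta_2)\to\theta_1))\leq v((\Diamond\theta_1\to\Box\theta_2)\to\Diamond\theta_1)$.

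Only then does the paper build an explicit strictly increasing rescaling $g$ from finitely many thresholds. These are the values $c_i$ of $v(\Diamond\theta)$ at which $\max\{u(\theta)\colon v(\Diamond\theta)=c\}$ jumps, and points $q_i\in[c_i,c_{i+1})$ bounding the $\Box$-values below $c_{i+1}$. It sets $g[(u_{c_i},u_{c_{i+1}}]]=(q_i,c_{i+1}]$ and checks each conjunct of $R(v,g\circ u)$. Your proposal has neither the order conditions nor the rescaling. Your \emph{finite fragment} inequality with $\bigwedge_l(\chi_l\to\theta_l)$ does not replace them, and it is not clear what the $\theta_l$ are.

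\textbf{Further problems.}
\begin{itemize}
\item You define $R$ as an infimum over \emph{all} formulas. The canonical model used here restricts it to a finite subformula-closed set $\Omega$. That finiteness is what makes your \emph{finitely many relevant values} true and a piecewise rescaling possible. As you have set things up, there are infinitely many constraints $v(\Diamond\chi)$, $v(\Box\psi)$, and the sketch does not apply.
\item Your concrete target, $w(\chi)\leq v(\Diamond\chi)$ or $w(\chi)\leq a$, is not sufficient. By the rule you quote right after it, $w(\chi)\to v(\Diamond\chi)\geq a$ requires $w(\chi)\leq v(\Diamond\chi)$ or $v(\Diamond\chi)\geq a$.
\item The approximation-versus-attainment issue you single out is not where the difficulty lies. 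The prime theory already gives $u(\varphi)=1$ exactly. After rescaling one has $w(\varphi)=1$ and $R(v,w)=a$ with no limit argument, so the compactness and Tychonoff discussion can be dropped.
\end{itemize}
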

\begin{proof}
This proof builds upon the definition of the canonical model in \cite{CaRo15}. This is defined, for a finite and closed (i.e., containing all of its subformulas) set of modal formulas $\Omega$, as the model with universe $W\coloneqq \{h \in Hom(Fm, [0,1]_G)\colon h(Th_{MG}) \subseteq \{1\}\},$ $e(h,p) \coloneqq h(p)$ for each propositional variable $p$, and $R(h,g) \coloneqq  \bigwedge\{(h(\Box \varphi) \rightarrow g(\varphi)) \wedge (g(\varphi) \rightarrow h(\Diamond \varphi))\colon \varphi \in \Omega\}$.
In \cite{CaRo15} the truth-lemma is proven (restricted to $\Omega$) for the above model, i.e., $e(h,\varphi) = h(\varphi)$ for every $\varphi \in \Omega$ and each $h$ in the universe. 

We will prove that, in fact, this model is also witnessed (restricted to $\Omega$) for formulas beginning with $\Diamond$, i.e., for each $v \in W$ and each $\Diamond \varphi \in \Omega$, there is $w \in W$ such that $R(v,w) \wedge e(w, \varphi) = e(v, \Diamond \varphi)$. This suffices to prove the theorem. 

Fix arbitrary $v$, $\Diamond \varphi$ as above, and assume $v(\Diamond \varphi) = \alpha >0$.\\ 
\textit{\textbf{Claim:}}
\[\varphi, \{\xi \colon \Box \xi \in \Omega, v(\Box \xi) \geq \alpha\} \not \vdash_{MG} \bigvee \{\theta \colon \Diamond \theta \in \Omega, v(\Diamond \theta) < \alpha\}\]
\textit{Proof of claim:}
\begin{quotation}
Assume the contrary. By D.T. and residuation,
\[\vdash_{MG} \varphi \rightarrow (\bigwedge \{\xi \colon \Box \xi \in \Omega, v(\Box \xi) \geq \alpha\} \rightarrow  \bigvee \{\theta \colon \Diamond \theta \in \Omega, v(\Diamond \theta) < \alpha\})\]
therefore, by $N_\Diamond$, 
\[\vdash_{MG} \Diamond \varphi \rightarrow \Diamond (\bigwedge \{\xi \colon \Box \xi \in \Omega, v(\Box \xi) \geq \alpha\} \rightarrow  \bigvee \{\theta \colon \Diamond \theta \in \Omega, v(\Diamond \theta) < \alpha\})\]
and by FS$_1$, 
\[\vdash_{MG} \Diamond \varphi \rightarrow (\Box \bigwedge \{\xi \colon \Box \xi \in \Omega, v(\Box \xi) \geq \alpha\} \rightarrow  \Diamond \bigvee \{\theta \colon \Diamond \theta \in \Omega, v(\Diamond \theta) < \alpha\}).\]
Lastly, by distributivity of $\Diamond$ over $\vee$ and of $\Box$ over $\wedge$, we would get that 
\[\vdash_{MG} \Diamond \varphi \rightarrow (\bigwedge \{\Box \xi \colon \Box \xi \in \Omega, v(\Box \xi) \geq \alpha\} \rightarrow  \bigvee \{\Diamond \theta \colon \Diamond \theta \in \Omega, v(\Diamond \theta) < \alpha\}).\]
But by assumption, $v(\Diamond \varphi) = \alpha$, and $v(\bigwedge \{\Box \xi \colon \Box \xi \in \Omega, v(\Box \xi) \geq \alpha\} \rightarrow  \bigvee \{\Diamond \theta \colon \Diamond \theta \in \Omega, v(\Diamond \theta) < \alpha\}) < \alpha$, leading to a contradiction. This concludes the proof of the claim. \qed
\end{quotation}

The above implies that there exists an element $u \in W$ such that (0) if $v(\Diamond \theta) < \alpha$ then $u(\theta) < 1$. In particular, 
\begin{enumerate}
\item[(1)] $u(\theta_1) < u(\theta_2)$ for any $\Diamond \theta_1, \Box \theta_2 \in \Omega$ such that $v(\Diamond \theta_1) < v(\Box \theta_2)$ and $v(\Diamond \theta_1) < \alpha$. This follows because from $\text{FS}_1$ we have that $v(\Diamond(\theta_2 \rightarrow \theta_1)) \leq v(\Box \theta_2 \rightarrow \Diamond \theta_1)$. From the assumptions, this equals $v(\Diamond \theta_1)$ and hence, is strictly less than $\alpha$,
\item[(2)] $u(\theta_1) \leq u(\theta_2)$ for any $\Diamond \theta_1, \Box \theta_2 \in \Omega$ such that $v(\Diamond \theta_1) \leq v(\Box \theta_2)$ and $v(\Diamond \theta_1) < \alpha$. This holds because $v(\Diamond((\theta_1 \rightarrow \theta_2) \rightarrow \theta_1)) \leq v(\Box(\theta_1 \rightarrow \theta_2) \rightarrow \Diamond \theta_1) \leq v((\Diamond \theta_1 \rightarrow \Box \theta_2) \rightarrow \Diamond \theta_1)$, where the first inequality follows from ${FS}_1$ and the second, from ${FS}_2$. From the assumptions, this equals $v(\Diamond \theta_1)$ and hence, is strictly less than $\alpha$. Therefore, $u((\theta_1 \rightarrow \theta_2) \rightarrow \theta_1) < 1$, so necessarily $u(\theta_1) \leq u(\theta_2)$.
\item[(3)]  $u(\theta) = 0 \Longrightarrow v(\Box \theta) = 0$, since $u(\theta) = u(\bot)$ implies by (2) above (contraposition), that $v(\Diamond \bot) \geq v(\Box \theta) $ or $v(\Diamond \bot) \geq \alpha$. This concludes the case, since $v(\Diamond \bot) = 0$ (axiom $F_\Diamond$). 
\item[(4)] $v(\Diamond \theta) = 0 \Longrightarrow u(\theta) = 0$, since in (3) above, $0 = v(\Diamond \theta) \leq v(\Box \bot)$ (hence $u(\theta) \leq 0$). 

\end{enumerate}

We now construct an object analogous to the one introduced in \cite{CaRo15}. 
Let $C=\{v(\Diamond \theta )\colon \Diamond \theta \in \Omega,  v(\Diamond \theta) \leq \alpha\}$ and define, for
each $c\in C$,
\begin{equation*}
u_{c}=\max \{u(\theta )\colon \theta \in \Omega,\ v(\Diamond \theta )=c\}.
\end{equation*}%
Note that $u_{0}=0$ by (4) above, and $u_{\alpha }=1$
because $u(\varphi )=1$. Define a finite increasing sequence $0=c_{0}<c_{1}<....$ of elements in $C$ as follows:\medskip
\begin{align*}
c_{0}\coloneqq& v(\Diamond \bot )=0\\
c_{i+1}\coloneqq& \min \{c\in C:c>c_{i}, u_{c}>u_{c_{i}}\}
\end{align*}

Let us denote by $\theta_i$ any formula in $\Omega$ such that $u_{c_i} = u(\theta_i)$ and $c_i = v(\Diamond \varphi_i)$. The last element of the above sequence is necessarily $c_N = \alpha$, since for any $c_i < \alpha$, we have that $c_i = v(\Diamond \varphi_i) < \alpha$ implies (by (0) above) that $u_{c_i} = v(\varphi_i) < 1$. Since $1 = u_{\alpha}$, necessarily there exists at least $c_{i+1} > c_i$. 

%

We further define (taking $\max \emptyset =0)$ a finite decreasing sequence 
\begin{align*}
q_{N-1}\coloneqq& \max \{v(\Box \theta )\colon v(\Box \theta )<c_{N}\},\\
q_{i-1}\coloneqq& \max \{c_{i},\max \{v(\Box \theta )\colon v(\Box \theta)<c_{i+1}\}\}, \text{ for } i\leq N-1.
\end{align*}

The previous sequences are such that:
\begin{align*}
0=c_{0}\leq q_{0}<c_{1}\leq q_{1}<....c_{N-1} \leq q_{N-1}<c_{N}=\alpha,\\
0=u_{c_{0}}<u_{c_{1}}<.....<u_{c_{N}}=1.
\end{align*}

Choose $g\colon [0,1]\rightarrow \lbrack 0,1]$ to be any strictly
increasing function such that
\begin{align*}
g(0)=0, &\qquad & g[(u_{c_{i}},u_{c_{i+1}}]]=&(q_{i},c_{i+1}] \text{ for } i<N-1, \\
g(1)=1, &\qquad  & g[(u_{c_{N-1}},1)]=&(q_{N-1},\alpha).
\end{align*}

It is immediate that $g$ is a G\"odel homomorphism and the valuation $w=g\circ v$
satisfies $w(\Sigma \cup T\mathcal{G}_{\Box \Diamond })=1$, hence
$w\in W.$ Furthermore, $w(\varphi) = 1 = w(\theta)$ for each $\theta$ with $v(\Box \theta) \geq \alpha$. 

We will prove now that $Rvw = \alpha$. First, we know that $w(\varphi) \rightarrow v(\Diamond \varphi) = 1 \rightarrow \alpha = \alpha$. It remains to verify that, for any $\Box \psi, \Diamond \theta \in \Omega$, 

$v(\Box \psi) \rightarrow w(\psi) \geq \alpha$ and
$w(\theta) \rightarrow v(\Diamond \theta) \geq \alpha$. 
\begin{itemize}
\item If $v(\Diamond \theta)\geq \alpha,$ clearly $w(\theta)\Rightarrow v(\Diamond \theta )\geq \alpha.$
\item If $v(\Diamond \theta )<\alpha $, we can prove that $w(\theta )\leq v(\Diamond \theta
).$  Consider two cases. First, if $u(\theta )\in
(u_{c_{i}},u_{c_{i+1}}) $ for some $i$ (recall that $u(\theta )<1$ by (0)). Then by construction of $g$, $w(\theta )\in (q_{i},c_{i+1}]$. As $u(\theta )>u_{c_{i}}$ and $c_{i+1}=v(\Diamond \varphi_{i+1})$ is the smallest $v(\Diamond \psi )$ with
$u(\psi)> u_{c_{i}} $ then $v(\Diamond \theta )\geq c_{i+1}\geq w(\theta ).$
Second, if $u(\theta )=0,$ then $w(\theta )=0$ and $v(\Box \theta )=0$ by (3).
\item  If $v(\Box \theta )\geq \alpha $ then $v(\Box \theta
)\Rightarrow w(\theta ) = 1 >\alpha$ because $u(\theta) = 1$.

\item $v(\Box \theta )<\alpha $ then $v(\Box \theta )\leq w(\theta ).$ To see this, note that $c_{i}\leq v(\Box \theta )\leq q_{i}<c_{i+1}$ for some $i$ and consider cases. First: $v(\Box \theta )=c_{i}=v(\Diamond \varphi _{i})$ then, by (2) $u_{c_{i}}=u(\varphi _{i})\leq u(\theta ).$ Therefore $c_{i}\leq w(\theta ).$ That is, $v(\Box \theta)\leq w(\theta ).$ Second: $c_{i}<v(\Box \theta )$ then $u_{c_{i}}<u(\theta )$, by (1) and by definition $q_{i}\leq w(\theta ),$ which shows again $v(\Box \theta )\leq w(\theta ).$
\end{itemize}

\end{proof}

\begin{theorem}\label{th:topwitnessed}
$KG$ (hence, also $KG_\Diamond$) is complete with respect to the class of $\langle \Diamond,1\rangle$-witnessed $\class{KG}$ models. In particular, the canonical model of $KG$ from \cite{RoVi20} is $\langle \Diamond,1 \rangle$-witnessed. 
\end{theorem}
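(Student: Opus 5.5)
Following the strategy of Theorem \ref{th:valueddiamondcompleteness}, we will work with the canonical model of $KG$ from \cite{RoVi20}, restricted to a finite closed set of formulas $\Omega$, and show that it is $\langle \Diamond,1\rangle$-witnessed. Completeness with respect to $\langle \Diamond,1\rangle$-witnessed $\class{KG}$ models then follows from the truth lemma of \cite{RoVi20}. The statement for $KG_\Diamond$ is obtained because $KG$ is conservative over its $\Diamond$-fragment, that is, $KG_\Diamond$ is the $\Diamond$-fragment of the logic of $\class{KG}$.

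Recall the structure of that canonical model. Its worlds are G\"odel homomorphisms $v$ that send the theorems of $KG$ to $1$. A world $v$ is crisply related to $w$ only when $w$ respects the box values, i.e.\ $v(\Box\xi)\le w(\xi)$ for all $\Box\xi\in\Omega$, and $w$ respects the diamond values, i.e.\ $w(\theta)\le v(\Diamond\theta)$ for all $\Diamond\theta\in\Omega$; the precise definition in \cite{RoVi20} may additionally involve order conditions, which we would also have to preserve. Fix $v$ and $\Diamond\varphi\in\Omega$ with $v(\Diamond\varphi)=1$. We must produce a successor $w$ with $w(\varphi)=1$.

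The first step is a syntactic claim analogous to the one in Theorem \ref{th:valueddiamondcompleteness}:
\[\varphi,\ \{\xi\colon \Box\xi\in\Omega\} \not\vdash_{KG} \bigvee\{\theta\colon \Diamond\theta\in\Omega,\ v(\Diamond\theta)<1\}\]
must hold in a strengthened form. We will actually need a homomorphism $u$ with $u(\varphi)=1$ that is order-compatible with $v$, in the following sense. First, $u(\theta)\le u(\xi)$ whenever $v(\Diamond\theta)\le v(\Box\xi)$, and strictly whenever $v(\Diamond\theta)<v(\Box\xi)$. Second, $u(\xi_1)\le u(\xi_2)$ whenever $v(\Box\xi_1)\le v(\Box\xi_2)<1$, and the same for diamonds. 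The refutation of such a combination of conditions, after the deduction theorem, gives a theorem of the form $\varphi\to\bigvee_j(\ldots)$, where each disjunct expresses the failure of one order condition. Here is where $Cr$ and the rule $N^c_\Diamond$ of the crisp logic enter. From $\vdash(\varphi\to\psi)\vee\chi$, rule $N^c_\Diamond$ gives $(\Diamond\varphi\to\Diamond\psi)\vee\Diamond\chi$. Together with $Cr$, $FS_1$ and $FS_2$, we then push the theorem through $\Diamond$ to obtain a disjunction of statements about $v$-values. Using $v(\Diamond\varphi)=1$, one of those disjuncts must be true at $v$, and each is false by the choice of the order conditions, which is a contradiction. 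The point of $Cr$ is to distribute $\Box$ over the disjunction, in the form $\Box(a\vee b)\to\Box a\vee\Diamond b$, so that each disjunct is evaluated in terms of $v(\Box\cdot)$ and $v(\Diamond\cdot)$ separately.

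The second step, given such a $u$ with $u(\varphi)=1$, is to rescale it by a strictly increasing G\"odel-endomorphism $g$ of $[0,1]$ with $g(1)=1$, just as in the proof of Theorem \ref{th:valueddiamondcompleteness}, and set $w=g\circ u$. Note the difference with the valued case: here we need to reach the value $1$, not $\alpha$, and every other constraint must be met with relation value $1$. Thus we need $v(\Box\xi)\le w(\xi)$ and $w(\theta)\le v(\Diamond\theta)$ exactly, not only up to $\alpha$. The construction interleaves the finitely many values $v(\Box\xi)$ and $v(\Diamond\theta)$ with the ordering of the corresponding $u$-values via intervals $(q_i,c_{i+1}]$. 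The order-compatibility conditions obtained in the first step guarantee that a monotone $g$ exists that places each $u(\xi)$ at or above $v(\Box\xi)$ and each $u(\theta)$ at or below $v(\Diamond\theta)$.

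The main obstacle will be the first step. In the valued case $FS_1$ and $FS_2$ sufficed, because the relation could take the value $\alpha$ and absorb slack. In the crisp case the successor must satisfy all inequalities sharply, and the claim involves a disjunction of order failures that cannot simply be moved under $\Diamond$. We would first try to prove it by induction on the number of order conditions, using $N^c_\Diamond$ to peel one disjunct at a time. If that fails, we would appeal directly to the characterization of crisp successors given in \cite{RoVi20}.
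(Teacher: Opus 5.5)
\textbf{Gap in Step 1.} Your Step 1 is where the proof breaks. The problem is not only that you leave it unproved, deferring it to an induction that may or may not work or to an unspecified result of \cite{RoVi20}. The claim as you formulate it is false, so no use of $N^c_\Diamond$ and $Cr$ can establish it.

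Here is a counterexample. Let $v$ be the root of the finite crisp model with two successors giving $(p,q)$ the values $(1,\frac12)$ and $(\frac12,1)$. Put $\varphi\coloneqq(q\to p)\to p$, and let $\Omega$ be the subformula closure of $\{\Box p,\Box q,\Diamond(p\wedge q),\Diamond\varphi\}$. Then $v(\Box p)=v(\Box q)=v(\Diamond(p\wedge q))=\frac12$ and $v(\Diamond\varphi)=1$.

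\begin{itemize}
\item Your box--box requirement ($u(\xi_1)\le u(\xi_2)$ whenever $v(\Box\xi_1)\le v(\Box\xi_2)<1$) forces $u(p)=u(q)$. Then $u(\varphi)=1$ forces $u(p)=u(q)=1$, so $u(p\wedge q)=1$.
\item This contradicts $u(p\wedge q)<1$, which your claim demands. Step 2 needs it too: a strictly increasing $g$ with $g(1)=1$ would give $w(p\wedge q)=1>v(\Diamond(p\wedge q))$.
\item Yet $\Diamond\varphi$ is witnessed by the successor $(1,\frac12)$, where $p$ and $q$ differ.
\item The diamond--diamond requirement fails in the same way.
\item Even your base claim fails here, since $p,q\vdash_{KG}p\wedge q$. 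Only the $\xi$ with $v(\Box\xi)=1$ may be taken as premises.
\end{itemize}

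Your diagnosis of the obstacle is also misplaced. At $\alpha=1$, the requirement $R(v,w)\ge\alpha$ in the proof of Theorem~\ref{th:valueddiamondcompleteness} is exactly the crisp condition. So no ordering sharper than the one used there is needed, and $Cr$ and $N^c_\Diamond$ play no role.

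\textbf{The missing idea.} No new syntactic claim is needed at all. By the truth lemma of \cite{RoVi20}, $v(\Diamond\varphi)=1$ is the supremum of $w_0(\varphi)$ over the successors $w_0$ of $v$. Only finitely many $\Diamond\theta\in\Omega$ have $v(\Diamond\theta)<1$. Hence some successor $w_0$ has $w_0(\varphi)>0$ and $w_0(\theta)\le v(\Diamond\theta)<w_0(\varphi)$ for all of them.

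Now let $\tau(x)\coloneqq x$ if $x<w_0(\varphi)$ and $\tau(x)\coloneqq 1$ otherwise. This is a G\"odel endomorphism of $[0,1]$, and $w\coloneqq\tau\circ w_0$ lies pointwise above $w_0$. Consequently:
\begin{itemize}
\item $w$ still sends the theorems of $KG$ to $1$;
\item every box constraint $v(\Box\xi)\le w(\xi)$ survives;
\item the formulas $\theta$ with $v(\Diamond\theta)<1$ keep their values, and the remaining diamond constraints are trivial;
\item $w(\varphi)=1$.
\end{itemize}

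This is exactly the move your insistence on a \emph{strictly increasing} rescaling rules out. With an injective $g$ you can only reach $w(\varphi)=1$ if $u(\varphi)=1$ already, which is what pushed you into building $u$ syntactically. The paper's map instead deliberately collapses everything above the threshold $w_0(\varphi)$ to the top.
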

\begin{proof}
In this case, we will work with the canonical model for $KG$ given in \cite{RoVi20}. For an arbitrary $\rho$, this is $\mathfrak{M}^\Omega$ for $\Omega =SFm(\rho)$, defined as the model with universe $W\coloneqq \{h \in Hom(Fm, [0,1]_G)\colon h(Th_{KG}) \subseteq \{1\}\},$ $e(h,p) \coloneqq h(p)$ for each propositional variable $p$, and $Rhg$ if and only if for any $\Box \varphi \in \Omega$ $h(\Box \varphi) \leq g(\varphi)$ and for any $\Diamond \varphi \in \Omega$, $g(\varphi) \leq h(\Diamond \varphi)$. 

We will prove that this model is $\langle \Diamond,\top\rangle $-witnessed.

Consider a  world $h$ in it and a formula $\Diamond \psi \in \Omega$ such that $h(\Diamond \psi) = 1$.
Necessarily, there is some $g \in W$ such that $g(\chi) < g(\psi)$ for each $\Diamond \chi \in SFm( \rho)$ with $h(\Diamond \chi) < 1$. 

Define the extended set of variables $\mathcal{V}^+ \coloneqq \mathcal{V}ar(\Omega) \cup \{\star \psi \in \Omega\colon \star \in \{\Box, \Diamond\}\}$. 
Let $g' \colon \mathcal{V}^+ \rightarrow [0,1]$ be defined by

\[g'(p) \coloneqq \begin{cases} g(p) &\hbox{ if }g(p) < g(\psi),\\ 1 &\hbox{ otherwise.} \end{cases}\]

A straightforward calculation shows that\footnote{Recall that we need to check this for propositional connectives only, since formulas beginning by a modality are interpreted by fresh variables. It is important that $\&$ equals $\wedge$, otherwise this claim would not necessarily hold.} to see that this extends to formulas, namely, that for any (modal) formula $\varphi$ \[g'(\varphi) = \begin{cases} g(\varphi) &\hbox{ if } g(\varphi) < g(\psi),\\ 1 &\hbox{ otherwise.} \end{cases}\]

Since $g(x) \leq g'(x)$ for any $x$, in particular $g'(\theta) = 1$ for any $\theta \in Th(KG)$, so $g'$ is an element of $W^\Omega$. We will check that $Rhg'$, i.e., by definition of the canonical model, that $h(\Box \varphi) \leq g(\varphi)$ for any $\Box \varphi \in \Omega$, and that $g(\varphi) \leq h(\Diamond \varphi)$ for any $\Diamond \varphi \in \Omega$. 
It is first clear that $h(\Box \varphi) \leq g'(\varphi)$, since $g(\varphi) \leq g'(\varphi)$ for any formula, and $h(\Box \varphi) \leq g(\varphi)$ due to the fact that we know that $Rhg$.

Regarding the $\Diamond$ condition, assume $h(\Diamond \varphi) < 1$ (otherwise, the case is trivial). Therefore, we know that $h(\Diamond \varphi) < h(\Diamond \psi)$, and so, from the choice of $g$ in the beginning of the proof, we know that $g(\varphi) < g(\psi)$. Therefore, $g'(\varphi) = g(\varphi) \leq h(\Diamond \varphi)$, where the last inequality follows from $Rhg$. 

With this, we have proven that $Rhg'$, and since $g'$ evaluates $\psi$ at the top, this concludes the proof.

\end{proof}



\section{Axiomatizations of the logics of finite G\"odel-Kripke models}\label{sec:axiomatization{}}

We now show that the formulas introduced in Lemmas \ref{lem:NotWitBox} and \ref{lem:NotWitDiamond} suffice to obtain a complete axiomatization with respect to witnessed models. This yields a full axiomatic characterization of the logics under consideration. Since these logics are complete with respect to models of finite modal depth (see Observation \ref{obs:finDepthDecidable}), it follows in particular that we obtain an axiomatization of the local consequence induced by finite G\"odel–Kripke models.

Let us name
\begin{align*}
W_\Box \coloneqq& \Box (((x \rightarrow y) \rightarrow y) \wedge (y \rightarrow x)) \rightarrow ((\Box x \rightarrow \Box y) \rightarrow \Box y)\\
W_\Diamond \coloneqq& ( (\Diamond x \rightarrow \Diamond y) \wedge ((\Diamond y \rightarrow \Diamond z) \rightarrow \Diamond z)) \rightarrow (\Diamond z \vee \Diamond ((x \rightarrow y) \wedge ((y \rightarrow z) \rightarrow z)))
\end{align*}



\begin{theorem}\label{th:completenessCrisp}
The logic $KG + W_\Box + W_\Diamond$ is complete with respect to $\class{KG}^w$. Moreover, $KG_\Box + W_\Box$, and $KG_\Diamond + W_\Diamond$ are also complete with respect to $\class{KG}^w$.\footnote{Recall that we allow for the same class of models to be complete with respect to different modal logics, varying their modal language.}
\end{theorem}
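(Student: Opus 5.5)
Soundness of $W_\Box$ and $W_\Diamond$ over $\class{KG}^w$ was already verified in the proofs of Lemmas \ref{lem:NotWitBox} and \ref{lem:NotWitDiamond}, so only completeness remains. The plan is to take a non-derivable $\rho$, work in the canonical model $\mathfrak{M}^\Omega$ of \cite{RoVi20} built from the logic $L = KG + W_\Box + W_\Diamond$ with $\Omega$ the finite closed set $SFm(\rho)$ (suitably enlarged by the finitely many instances of the axioms needed below), and show that this model is witnessed for every $\Box\varphi,\Diamond\varphi\in\Omega$. The truth lemma of \cite{RoVi20} relativized to $\Omega$ then gives a witnessed countermodel. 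By Observation \ref{obs:finDepthDecidable} this can be cut down to a finite model, which also yields the finite-model reading of the result.

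\textbf{Diamond witnessing.} Fix a world $h$ and $\Diamond\psi\in\Omega$ with $h(\Diamond\psi)=\beta$. The case $\beta=1$ is Theorem \ref{th:topwitnessed}, and $\beta=0$ is trivial, so assume $0<\beta<1$. I will imitate the proof of Theorem \ref{th:topwitnessed}. First I look for $g$ with $Rhg$ such that $g(\chi)<g(\psi)$ whenever $\Diamond\chi\in\Omega$ and $h(\Diamond\chi)<\beta$, and such that $g(\chi)\le g(\psi)$ whenever $h(\Diamond\chi)\le\beta$. Then I apply an order-preserving rescaling $g'=f\circ g$, where $f$ is a strictly increasing Gödel homomorphism of $[0,1]$ that sends $g(\psi)$ to $\beta$ and the values in the relevant intervals into the gaps between the $h(\Box\cdot)$ and $h(\Diamond\cdot)$ values, exactly as the function $g$ is built in Theorem \ref{th:valueddiamondcompleteness}. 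To obtain the first $g$, I argue by contradiction as in the Claim of Theorem \ref{th:valueddiamondcompleteness}. If no such $g$ exists, a finite derivation gives
\[\vdash \psi \rightarrow \bigvee_{h(\Diamond\chi)<\beta}\big((\chi\rightarrow\psi)\rightarrow\psi\big)\vee\ldots\]
and applying $N^c_\Diamond$ and $K_\Diamond$ pushes the derivation to the $\Diamond$ level. The strict inequalities in $h$ must then be produced from instances of $W_\Diamond$, with $x,y,z$ instantiated by $\psi$, $\chi$ and the disjunction of the lower formulas. This is exactly where the third variable $z$ of $W_\Diamond$ separates the relevant values from those below $\beta$. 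The result contradicts $h$ being a model of $L$.

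\textbf{Box witnessing.} The dual argument handles $h(\Box\varphi)=\beta<1$. We need $g$ with $Rhg$ and $g(\varphi)=\beta$; here the new ingredient is $W_\Box$ in place of $W_\Diamond$. I look for $g$ with $g(\varphi)\le g(\xi)$ for every $\Box\xi$ with $h(\Box\xi)\ge\beta$, and $g(\varphi)<g(\xi)$ when $h(\Box\xi)>\beta$, and then rescale. The failure of such a $g$ gives a derivation of $\bigwedge\Box$-premises $\rightarrow \big((\xi\rightarrow\varphi)\rightarrow\varphi\big)\wedge(\varphi\rightarrow\xi)$. Taking the box via $N_\Box$ and $K_\Box$ and then applying $W_\Box$ yields $(\Box\xi\rightarrow\Box\varphi)\rightarrow\Box\varphi$, which evaluates to $1$ in $h$. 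This contradicts $h(\Box\varphi)=\beta<h(\Box\xi)$ or the equality case. The crisp interaction is controlled by $Cr$ and the $\langle\Diamond,1\rangle$ lemma, which secure the values $\ge\beta$.

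\textbf{Mono-modal fragments and the main obstacle.} For $KG_\Box+W_\Box$ and $KG_\Diamond+W_\Diamond$, the same arguments are run in the mono-modal canonical models from \cite{CaRo10,MeOl09}, using only the modality present. The main obstacle is the Diamond step: extracting from the failure of the separating $g$ a derivable formula that $W_\Diamond$ turns into a contradiction, while keeping the rescaling compatible with all $\Box$ constraints in the crisp relation. If this direct route fails, I would first try reducing to a single critical $\chi$ by taking as $z$ the disjunction of all $\chi$ with $h(\Diamond\chi)<\beta$.
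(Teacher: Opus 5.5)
Your overall architecture matches the paper's: the canonical model of \cite{RoVi20} over $KG+W_\Box+W_\Diamond$, soundness from Lemmas \ref{lem:NotWitBox} and \ref{lem:NotWitDiamond}, and witnessing via a suitable successor followed by a monotone rescaling. The order conditions you ask of that successor are also the right ones. But neither witnessing step is actually established. I use the paper's notation below: $\varphi$ is the target formula and $\alpha$ its value at $h$.

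For $\Diamond$ you concede that the key step is open. The route you sketch (assume no separating successor, extract a derivation, push it through $N^c_\Diamond$ and $K_\Diamond$) does not fit the crisp canonical model. There $R$ is defined by comparison with the actual reals $h(\Box\cdot)$, $h(\Diamond\cdot)$, so the non-existence of a successor with prescribed order properties does not by itself yield derivability of any formula. The only bridge between successors and formulas is the truth lemma, together with $\langle\Diamond,1\rangle$-witnessing, at $h$. The missing idea is to use $W_\Diamond$ semantically at $h$, with aggregated formulas. For $\alpha\in(0,1)$ put $\phi:=\bigvee\{\theta:\Diamond\theta\in\Omega,\ h(\Diamond\theta)=\alpha\}$ and $\chi:=\bigvee\{\theta:\Diamond\theta\in\Omega,\ h(\Diamond\theta)<\alpha\}$, and instantiate $x:=\phi$, $y:=\varphi$, $z:=\chi$. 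The target must sit in the $y$-slot, not the $x$-slot as in your sketch. The antecedent of this instance has value $1$ at $h$, and $h(\Diamond\chi)<\alpha<1$, so $h(\Diamond((\phi\rightarrow\varphi)\wedge((\varphi\rightarrow\chi)\rightarrow\chi)))=1$. Theorem \ref{th:topwitnessed} then yields $g$ with $Rhg$, $g(\phi)\le g(\varphi)$ and $g(\chi)<g(\varphi)$. This is exactly your separating successor, obtained with no derivation argument. Since $Rhg$ already holds and $g(\varphi)\le\alpha$, a single increasing stretch of $[g(\varphi),\beta)$ onto $[\alpha,\beta)$, with $\beta$ the next $\Diamond$-value above $\alpha$, finishes the step. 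The gap-by-gap construction of Theorem \ref{th:valueddiamondcompleteness} is unnecessary.

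For $\Box$, with $\alpha=h(\Box\varphi)<1$, your instance of $W_\Box$ is oriented the wrong way. With $x:=\xi$, $y:=\varphi$ you get $(\Box\xi\rightarrow\Box\varphi)\rightarrow\Box\varphi$, whose value at $h$ is $1$ precisely when $h(\Box\varphi)<h(\Box\xi)$. So nothing is contradicted for the strictly higher $\xi$, and in the equality case the value is $\alpha$, not $1$. Moreover, a successor falsifying your formula satisfies $g(\xi)\le g(\varphi)$, the opposite of what the compression needs. The paper takes $x:=\varphi$ and $y:=\phi:=\bigwedge\{\psi:\Box\psi\in\Omega,\ h(\Box\psi)=\alpha\}$, and puts $\chi:=\bigwedge\{\psi:\Box\psi\in\Omega,\ h(\Box\psi)>\alpha\}$ as an antecedent inside the box. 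Write $\Theta:=((\varphi\rightarrow\phi)\rightarrow\phi)\wedge(\phi\rightarrow\varphi)$. If $h(\Box(\chi\rightarrow\Theta))=1$, then $K_\Box$ and $W_\Box$ give $h(\Box\chi)\le h((\Box\varphi\rightarrow\Box\phi)\rightarrow\Box\phi)=\alpha$, contradicting $h(\Box\chi)>\alpha$. Hence some successor has $g(\chi)>g(\Theta)\ge g(\varphi)$ and $g(\varphi)\le g(\phi)$. The strictness for the higher boxes comes from $\chi$, not from $W_\Box$, and the aggregation into $\chi,\phi$ is what lets a single successor meet all constraints at once. Then compress $(\beta',g(\varphi)]$ onto $(\beta',\alpha]$, with $\beta'$ the next $\Box$-value below $\alpha$. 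This map only lowers values, so the $\Diamond$ constraints are automatic; neither $Cr$ nor $\langle\Diamond,1\rangle$-witnessing plays any role in this step.

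The $\Diamond$ argument uses only $W_\Diamond$ and Theorem \ref{th:topwitnessed}, which covers $KG_\Diamond$. The $\Box$ argument uses only $K_\Box$ and $W_\Box$. So both transfer to the mono-modal fragments as you intend.
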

\begin{proof}
We saw in Lemma  \ref{lem:NotWitDiamond} that the formula $W_\Diamond$ is sound with respect to $\Diamond$-witnessed models, and in Lemma \ref{lem:NotWitBox} that $W_\Box$ is sound with respect to $\Box$-witnessed models.

To prove completeness, consider the definition of the canonical model $\mathfrak{M}^\Omega$ from \cite{RoVi20}\footnote{For the interested reader, see the definition in the proof of Theorem \ref{th:topwitnessed}}, and modify it by considering the restricted universe 
\[W^\Omega_{wit} \coloneqq \{u \in Hom(Fm, [0, 1]_G): u(Th(KG + W_\Box + W_\Diamond)) = \{1\}\}\]
The fact that this model satisfies the truth lemma for the formulas in $\Omega$ is as in \cite{RoVi20}, and it is $\langle \Diamond, 1 \rangle$-witnessed by Theorem \ref{th:topwitnessed}.

We will now check that this model is witnessed for the formulas in $\Omega$, which will conclude the proof of the theorem.

Let us begin by the $\Box$-case. Consider any world $h$ and any formula $\Box \varphi \in \Omega$, and let $\alpha \coloneqq h(\Box \varphi)$ and let $\beta \coloneqq max\{h(\Box \psi)\colon \Box \psi \in \Omega, h(\Box \psi) < \alpha\}$ if this maximum exists, and $0$ otherwise. Observe that, in this latter case, also $\alpha= 0$, since otherwise, $h(\Box \bot) = 0 < \alpha$.
 Assume also that $\alpha < 1$, since otherwise the formula is trivially witnessed. Let 
\[
\chi \coloneqq \bigwedge \{\psi \colon \Box \psi \in \Omega, h(\Box \psi) > \alpha\} \qquad \text{ and } \qquad
\phi \coloneqq \bigwedge \{\psi \colon \Box \psi \in \Omega, h(\Box \psi) = \alpha\}.
\]

It is easy to check that 
\begin{equation}\label{eq:notTheoremBox}
\Box(\chi \rightarrow  ((\varphi \rightarrow \phi )\rightarrow \phi) \wedge (\phi \rightarrow \varphi))
\end{equation}
is not a theorem of the logic. 
Otherwise, by $K_\Box$ we would have that $\Box \chi \rightarrow \Box( (\varphi \rightarrow \phi )\rightarrow \phi) \wedge (\phi \rightarrow \varphi))$ is a theorem of the logic. Therefore, through $W_\Box$, we would get that 
$\Box \chi \rightarrow ((\Box \varphi \rightarrow \Box \phi) \rightarrow \Box \phi)$. But this is a contradiction, since 
$h(\Box \chi) > \alpha$ while $h((\Box \varphi \rightarrow \Box \phi) \rightarrow \Box \phi) = \alpha$ (using distributivity of $\Box$ and $\wedge$). 

The fact that formula (\ref{eq:notTheoremBox}) is not a theorem, implies that there exists some world $g$ in the canonical model such that $Rhg$ and (*) $g(\chi) > g(((\varphi \rightarrow \phi) \rightarrow \phi) \wedge (\phi \rightarrow \varphi))$. Moreover, we can assume $g(\varphi) > \alpha$ (which is also either strictly greater than $\beta$, or both $\alpha= \beta = 0$), since otherwise $g$ is the witness we are looking for. 

Note that 
\[g(((\varphi \rightarrow \phi) \rightarrow \phi) \wedge (\phi \rightarrow \varphi)) = \begin{cases} 1 &\hbox{ if } g(\phi) < g(\varphi) \text{ or } g(\phi) = g(\varphi) = 1,\\ g(\phi) &\hbox{ if } e(\varphi) \leq e(\phi) < 1,\\ g(\varphi) &\hbox{ if } e(\phi) > e(\varphi)\end{cases}\]

In any case, this value is greater than or equal to $g(\varphi)$, hence from $(*)$ it follows that $g(\chi) > g(\varphi)$, and also that $e(\varphi) \leq e(\phi)$ (otherwise, the value of the formula is $1$).

If $\beta < \alpha$, consider the endomorphism $\sigma \colon [0,1] \rightarrow [0,1]$ defined by

\[\sigma(x) \coloneqq \begin{cases} x &\hbox{ if }x > g(\varphi),  \\ \frac{ x - \beta}{g(\varphi) - \beta}(\alpha - \beta) + \beta &\hbox{ if } x \in (\beta, g(\varphi)], \\ x &\hbox{ otherwise.}\end{cases}\]

It is well defined since $g(\varphi) > \alpha$ (and so, greater than $\beta)$.

Otherwise (i.e., if $\beta = \alpha = 0)$, define simply 
\[\sigma(x) \coloneqq \begin{cases} x &\hbox{ if }x > g(\varphi),  \\ 0 &\hbox{ otherwise.}\end{cases}\]

In both cases, we have that $g' \coloneqq \sigma \circ g$ is a homomorphism of $Fm$ into $[0,1]_G$, and such that $g(\psi) = 1$ implies that $g'(\psi) = 1$ too, so $g' \in W^{\Omega}_{wit}$. Let us check that $Rhg'$, namely that $h(\Box \varphi) \leq g'(\varphi)$ for any $\Box \varphi \in \Omega$, and that $g'(\varphi) \leq h(\Diamond \varphi)$ for any $\Diamond \varphi \in \Omega$.
Since $\sigma(a) \leq a$ for any $a$, we have that $g'(x) \leq g(x)$ for any $x$, so it is clear that $h(\Diamond \psi) \geq g(\psi) \geq g'(\psi)$ for any $\psi$.

To check the $\Box$ cases, let us consider the three possibilities:
\begin{itemize}
\item If $h(\Box \psi) > \alpha$, then we know that $\psi$ is an element of the conjunction of the definition of $\chi$, so $g(\psi) \geq g(\chi)$. Above we proved that $g(\chi) > g(\varphi)$, and hence, by the definition of $\sigma$, we get $\sigma(g(\psi)) = g(\psi)$, concluding that $h(\Box \psi) \leq g'(\psi)$. 
\item If $h(\Box \psi) =  \alpha$, then $\psi$ is an element of the conjunction of the definition of $\phi$, so $g(\psi) \geq g(\phi)$. Above we proved that $g(\phi) \geq g(\varphi)$, and hence, by the definition of $\sigma$, we get $\sigma(g(\psi)) \geq g(\varphi) = \alpha$. Therefore, $h(\Box \psi) \leq g'( \psi)$.
\item If $h(\Box \psi) < \alpha$ (therefore, $\beta < \alpha$, and we are in the first definition of $\sigma$), we know by the definition of $\beta$ that $h(\Box \psi) \leq \beta$. Now, if $g(\psi) \leq \beta$ the endomorphism $\sigma$ behaves as the identity, so $h(\Box \psi) \leq g'(\psi)$. Otherwise, namely, if $g(\psi) > \beta$, we know that $\sigma(g(\psi)) > \beta$ too. Therefore, $h(\Box \psi) \leq \beta < g'(\psi)$, proving the case. 
\end{itemize}

Let us now check that the model is also witnessed for $\Diamond$ formulas, using only $W_\Diamond$.

Dually to the $\Box$, consider any world $h$ and any formula $\Diamond \varphi \in \Omega$, and let $\alpha \coloneqq h(\Diamond \varphi)$. Assume also that $\alpha > 0$ since otherwise, again, the case is trivial, and also that $\alpha < 1$, since otherwise we know the model is $\langle \Diamond,1\rangle$-witnessed by Lemma \ref{th:topwitnessed}. Therefore, we let $\beta \coloneqq \min\{h(\Diamond \psi) \colon \Diamond \psi \in \Omega, h(\Diamond \psi) > h(\Diamond \varphi)\}$, and observe this minimum always exists, since $h(\Diamond \top) = \top > \alpha$.

Let
\[
\chi \coloneqq \bigvee \{\psi \colon \Diamond \psi \in \Omega, h(\Diamond \psi) < \alpha\} \qquad \text{ and }\qquad
\phi \coloneqq \bigvee \{\psi \colon \Diamond \psi \in \Omega, h(\Diamond \psi) = \alpha \}.
\]

Since $h(W_\Diamond) = 1$, in particular substituting $\phi, \varphi$ and $\chi$ we have that 
\[h(((\Diamond \phi \rightarrow \Diamond \varphi) \wedge ((\Diamond \varphi \rightarrow \Diamond \chi) \rightarrow \Diamond \chi)) \rightarrow (\Diamond \chi \vee \Diamond ((\phi \rightarrow \varphi) \wedge ((\varphi \rightarrow \chi) \rightarrow \chi)))) =1.\]

By the definitions of $\phi$ and $\chi$, and the distributivity of $\Diamond$ and $\vee$ 
\[h(((\Diamond \phi \rightarrow \Diamond \varphi) \wedge ((\Diamond \varphi \rightarrow \Diamond \chi) \rightarrow \Diamond \chi))) = 1.\]
Therefore, it follows that also
\[h(\Diamond \chi \vee \Diamond ((\phi \rightarrow \varphi) \wedge ((\varphi \rightarrow \chi) \rightarrow \chi))) = 1.\]

By definition $h(\Diamond \chi) < \alpha < 1$, therefore, $h(\Diamond ((\phi \rightarrow \varphi) \wedge ((\varphi \rightarrow \chi) \rightarrow \chi))) = 1$. Hence from Lemma \ref{th:topwitnessed} follows that there exists a world $g$ such that $Rhg$ and $e(g, (\phi \rightarrow \varphi) \wedge ((\varphi \rightarrow \chi) \rightarrow \chi))= 1$. Thus, for this world $g$, it holds that  $e(g, \phi) \leq e(g, \varphi)$ and $e(g, \chi) < e(g, \varphi)$ (since, by construction, $e(g, \chi) \neq 1)$).

Consider then the endomorphism $\sigma \colon [0,1] \rightarrow [0,1]$ defined by

\[\sigma(x) \coloneqq \begin{cases} x &\hbox{ if }x < e(g, \varphi),  \\ \frac{ \beta - x}{\beta - e(g, \varphi)}(\beta - \alpha) + \alpha &\hbox{ if } x \in [e(g, \varphi), \beta), \\ x &\hbox{ otherwise.}\end{cases}\]

We have that $g' \coloneqq \sigma \circ g$ is an homomorphism of $Fm$ into $[0,1]_G$, and such that $g(\psi) = 1$ implies that $g'(\psi) = 1$ too, so $g' \in W^\Omega_{wit}$. Let us check that $Rhg'$. 
Since $g'(x) \geq g(x)$, it is clear that $h(\Box \psi) \leq g(\psi) \leq g'(\psi)$ for any $\psi$.

To check the $\Diamond$ cases, let us consider the three possibilities:
\begin{itemize}
\item If $h(\Diamond \psi) < \alpha$, then we know $\psi$ is an element of the disjunction of the definition of $\chi$, so $e(g, \psi) \leq e(g, \chi)$. Above we saw that $e(g, \chi) < e(g, \varphi)$, and hence, by the definition of $\sigma$, we get that $\sigma(e(g, \psi)) = e(g, \psi)$, concluding that $h(\Diamond \psi) \geq g'(\psi) = e(g, \psi)$. 

\item If $h(\Diamond \psi) = \alpha$, then $\psi$ is an element of the disjunction of the definition of $\phi$, so $e(g, \psi) \leq e(g, \phi)$. Above we saw that $e(g, \phi) \leq e(g, \varphi)$, and hence, by the definition of $\sigma$, we get that $\sigma(e(g, \psi)) \leq e(g, \varphi) = \alpha$. Therefore, $h(\Diamond \psi) \geq g'(\psi)$.

\item If $h(\Diamond \psi) > \alpha$, we know that $h(\Diamond \psi) \geq \beta$. Now, if $g(\psi) \geq \beta$ the endomorphism $\sigma$ acts as the identity, so $h(\Box \psi) \geq g'(\psi)$. Otherwise, namely, if $g(\psi) < \beta$, we know that $\sigma(e(g, \psi)) < \beta$ too. Therefore, $h(\Diamond \psi) \geq \beta > g'(\psi)$, proving the case. 
\end{itemize}

\end{proof}

\begin{corollary}
  $KG + W_\Box + W_\Diamond$ is complete with respect to the class of finite $\class{KG}$ models, and the same holds for the corresponding mono-modal extensions.
\end{corollary}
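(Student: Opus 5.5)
The plan is to derive the corollary from Theorem \ref{th:completenessCrisp} together with the finite-depth reduction behind Observation \ref{obs:finDepthDecidable}. We need two directions: soundness for finite crisp models, and completeness with respect to them.

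\emph{Soundness.} Every finite crisp model is witnessed, because a finite subset of $[0,1]$ attains its supremum and infimum. For $\Diamond\varphi$ at $v$ one takes the successor maximizing $e(w,\varphi)$, or, if $v$ has no successors, observes that the value is $0$ and the condition trivially holds. The $\Box$ case is dual. Hence finite $\class{KG}$ models form a subclass of $\class{KG}^\omega$. By Theorem \ref{th:completenessCrisp}, or directly by the soundness arguments of Lemmas \ref{lem:NotWitBox} and \ref{lem:NotWitDiamond}, every theorem of $KG + W_\Box + W_\Diamond$, and every derivation from premises, is valid in them.

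\emph{Completeness.} Suppose $\Gamma \not\vdash \varphi$ in $KG + W_\Box + W_\Diamond$. First reduce to finite $\Gamma$. This should follow from the finitary nature of the Hilbert calculus (local consequence, with the deduction theorem available), since rules are applied only to theorems. So it suffices to refute $\gamma \rightarrow \varphi$ for a single formula.

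Theorem \ref{th:completenessCrisp} gives a witnessed crisp model $\mathfrak{M}$ and a world $v$ with $e(v,\gamma)=1 > e(v,\varphi)$. Then we apply the construction sketched for Observation \ref{obs:finDepthDecidable}. Unravel $\mathfrak{M}$ from $v$ into a tree, and cut it at the modal depth $n$ of $\gamma\rightarrow\varphi$. Then, level by level, keep for each retained world $u$ and each $\Box\psi,\Diamond\psi$ in the finite set of relevant subformulas at that depth only the single successor witnessing $e(u,\Box\psi)$ (resp.\ $e(u,\Diamond\psi)$). The witnesses exist because $\mathfrak{M}$ is witnessed, and can be chosen because the unravelling preserves values and witnessing.

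An induction on subformulas, starting from depth $n$ and moving upward, shows that values of relevant formulas are preserved in the pruned submodel. Adding further successors cannot lower a $\Diamond$-value below the witness or raise it above the original supremum, and symmetrically for $\Box$. The result is a finite crisp model refuting $\gamma\rightarrow\varphi$ at the root.

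\emph{Main obstacle.} The main care point is this pruning induction. At a world $u$ one keeps a subset $S$ of the original successors that contains the witnesses. The new value of $\Diamond\psi$ is $\max_{w\in S} e(w,\psi)$, which lies between the witness value and the original supremum; these coincide, so the value is unchanged provided successor values of $\psi$ are themselves preserved (inductive hypothesis on depth). The $\Box$ case is dual. Since all formulas in the calculation have bounded depth, the unravelled tree of depth $n$ suffices, and leaves need no successors because depth-$0$ formulas are propositional.

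\emph{Mono-modal extensions.} The same argument applies verbatim to $KG_\Box + W_\Box$ and $KG_\Diamond + W_\Diamond$, using the mono-modal parts of Theorem \ref{th:completenessCrisp}. Only the witnesses for the one modality present need to be kept. Soundness over finite models uses only the relevant Lemma \ref{lem:NotWitBox} or Lemma \ref{lem:NotWitDiamond}.
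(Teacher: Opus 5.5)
Your route is the paper's own. Finite crisp models are witnessed, so soundness follows from Theorem~\ref{th:completenessCrisp}. Completeness follows from that theorem together with the unravel--cut--prune construction behind Observation~\ref{obs:finDepthDecidable}, and your pruning induction is fine.

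The step that does not work is the reduction to finite $\Gamma$. Finitarity of $\vdash$ only tells you that $\Gamma\not\vdash\varphi$ implies $\Gamma_0\not\vdash\varphi$ for every finite $\Gamma_0\subseteq\Gamma$. That yields a finite countermodel for each $\Gamma_0$ separately. Completeness, however, asks for a single finite model and world where \emph{every} member of $\Gamma$ has value $1$. Getting there from the finite subsets would require compactness of consequence over finite models, and that compactness fails.

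Take $\Gamma=\{\Diamond p_i\colon i\in\omega\}\cup\{\Box\neg(p_i\wedge p_j)\colon i\neq j\}$ and $\varphi=\bot$. Consider the $\{0,1\}$-valued crisp model with a root seeing worlds $w_i$ ($i\in\omega$), where $p_j$ is true at $w_i$ iff $i=j$. It is witnessed and satisfies $\Gamma$ at the root, so $\Gamma\not\vdash\bot$ in $KG+W_\Box+W_\Diamond$ by soundness. In a finite model, though, $e(v,\Diamond p_i)=1$ needs a successor where $p_i$ has value $1$, and the $\Box$-premises force these successors to be pairwise distinct. So no world of a finite model satisfies $\Gamma$, and $\Gamma\Vdash\bot$ holds vacuously in every finite model.

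Hence the corollary can only be true for finite sets of premises. That is also all the paper's argument yields, since the depth bound in Observation~\ref{obs:finDepthDecidable} is read off from the finitely many formulas involved. You should drop the claimed reduction and state the result for finite $\Gamma$. By the deduction theorem this reduces to a single premise $\gamma=\bigwedge\Gamma$, and you apply Theorem~\ref{th:completenessCrisp} directly to $\gamma\not\vdash\varphi$. For that statement your argument goes through, in the bi-modal system as well as in the two mono-modal ones.
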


\begin{theorem}\label{th:completenessVal}
$MG + W_\Box$ is complete with respect to $\class{MG}^w$. 
\end{theorem}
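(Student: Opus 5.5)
Soundness of $W_\Box$ over $\class{MG}^w$ needs a separate check, since Lemma \ref{lem:NotWitBox} only handled crisp models. I would take a valued witnessed model, a world $u$, and the non-trivial case $e(u,\Box x)\leq e(u,\Box y)<1$. Let $v$ be the witness for $\Box x$, so $e(u,\Box x)=R(u,v)\rightarrow e(v,x)$. Since this is $<1$, we have $R(u,v)>e(v,x)$ and the value equals $e(v,x)$. Then $e(v,x)\leq e(u,\Box y)\leq R(u,v)\rightarrow e(v,y)$. I would reproduce the two cases of Lemma \ref{lem:NotWitBox}, namely $e(v,x)<e(v,y)$ and $e(v,x)=e(v,y)$. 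In each case I would bound $R(u,v)\rightarrow e(v,((x\rightarrow y)\rightarrow y)\wedge(y\rightarrow x))$ above by $e(u,\Box y)$, using that the inner formula evaluates to $e(v,x\wedge y)$ there and that $R(u,v)>e(v,x)$.

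For completeness I would use the canonical model of \cite{CaRo15} as recalled in the proof of Theorem \ref{th:valueddiamondcompleteness}, with universe restricted to homomorphisms evaluating $Th(MG+W_\Box)$ to $1$. The truth lemma goes through as in \cite{CaRo15}, since that argument only uses that the logic extends $MG$ and is closed under substitution. The $\Diamond$-witnessing proof of Theorem \ref{th:valueddiamondcompleteness} also applies verbatim: the homomorphism $g\circ u$ built there still evaluates the extended theory to $1$, because $g$ preserves $1$. It therefore remains to show $\Box$-witnessing for every $\Box\varphi\in\Omega$.

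For $\Box$-witnessing I would follow the $\Box$-case of Theorem \ref{th:completenessCrisp}, adapted to valued $R$. Fix $h$ with $\alpha=h(\Box\varphi)<1$, let $\beta$ be the largest $\Box$-value below $\alpha$, and define $\chi,\phi$ as there. The same $K_\Box$ plus $W_\Box$ argument shows that $\Box\chi\rightarrow\Box(((\varphi\rightarrow\phi)\rightarrow\phi)\wedge(\phi\rightarrow\varphi))$ is not a theorem. Next I need a world $g$ with $R(h,g)\geq$ something $>\alpha$ and $g(\chi)>g(\varphi)$, $g(\varphi)\leq g(\phi)$. I would obtain it by the claim-style non-derivability argument from Theorem \ref{th:valueddiamondcompleteness}, applied at a level $\gamma$ slightly above $\alpha$ (take $\gamma$ to be the least $\Box$-value above $\alpha$, or $1$). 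I would then rescale $g$ by a strictly increasing Gödel endomorphism built as in the $g$-construction of Theorem \ref{th:valueddiamondcompleteness} so that $R(h,g')>\alpha$, i.e. $g'$ stays above the relevant $h(\Box\psi)$ and below the relevant $h(\Diamond\psi)$. Finally I would compose with the $\sigma$ of Theorem \ref{th:completenessCrisp}, which pulls $g(\varphi)$ down to exactly $\alpha$ while fixing values above $g(\varphi)$, to get $R(h,g')\rightarrow g'(\varphi)=\alpha$.

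The main obstacle is the valued-relation version of this step. The new world must make $R(h,g')>\alpha$, which requires simultaneously respecting the $\Diamond$-constraints $g'(\theta)\leq h(\Diamond\theta)$ and the $\Box$-constraints, and also realizing the ordering obtained from $W_\Box$. I would first try to merge the two constructions. The first is the chain $c_i,q_i$ from Theorem \ref{th:valueddiamondcompleteness}, with the roles of $\Box$ and $\Diamond$ swapped and cut at level $\alpha$. The second is the derivation ensuring (1)–(4)-type order facts, using $FS_1$ and $FS_2$ to relate $\Box$-values above $\alpha$ with $\Diamond$-values. Note that $\Diamond$-witnessing is not needed here, which explains why no $W_\Diamond$ appears.
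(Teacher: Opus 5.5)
Your direct soundness check of $W_\Box$ on valued witnessed models is correct, and more self-contained than the reduction to crisp models used in Lemma~\ref{lem:NotWitBox}. The restricted canonical model, the inheritance of $\Diamond$-witnessing from Theorem~\ref{th:valueddiamondcompleteness}, and the choice of $\alpha,\beta,\chi,\phi$ all match the paper. The gap is the step you yourself call the main obstacle: you never produce a successor $g$ of $h$ with $R(h,g)>\alpha$, $g(\chi)>g(\varphi)$ and $g(\varphi)\leq g(\phi)$. The ``claim-style argument at level $\gamma$'', the dualized $c_i,q_i$ rescaling and their proposed merge are left unspecified. There is no non-derivable sequent, no derivation of the order facts you would need, and no check of the $\Box$- and $\Diamond$-constraints. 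Since this step is the entire content of $\Box$-witnessing in the valued setting, the proof is incomplete as written.

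The missing observation is that no new construction is needed. Put $\theta\coloneqq((\varphi\rightarrow\phi)\rightarrow\phi)\wedge(\phi\rightarrow\varphi)$. The $K_\Box$/$W_\Box$ argument shows $h(\Box(\chi\rightarrow\theta))<1$. From the valued canonical model the paper then reads off a world $g$ with $R(h,g)>g(\chi\rightarrow\theta)$; this is the only point where the valued case differs from the crisp one. That single inequality gives everything:
\begin{itemize}
\item $g(\chi\rightarrow\theta)<1$ yields $g(\chi)>g(\theta)=g(\chi\rightarrow\theta)$;
\item $g(\theta)<1$ forces $g(\varphi)\leq g(\phi)$ and $g(\theta)=g(\varphi)$;
\item hence $R(h,g)>g(\varphi)$, and $R(h,g)\leq\alpha\rightarrow g(\varphi)$ then forces $g(\varphi)\geq\alpha$.
\end{itemize}
If $g(\varphi)=\alpha$, then $g$ is the witness. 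Otherwise the $\sigma$ of Theorem~\ref{th:completenessCrisp} alone suffices. It lies pointwise below the identity, so no $\Diamond$-conjunct of $R$ decreases, and the three $\Box$-cases show that no $\Box$-conjunct decreases either. Hence $R(h,\sigma\circ g)\geq R(h,g)>g(\varphi)>\alpha=\sigma(g(\varphi))$. So the lower bound on $R$ comes for free, and your intermediate rescaling and $FS$-based order facts are unnecessary.

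One caveat on borrowing $\sigma$: in the branch $\alpha=\beta=0$, collapsing $[0,g(\varphi)]$ to $0$ is not a G\"odel homomorphism. For $0<a\leq g(\varphi)$ one gets $\sigma(\neg a)=0\neq 1=\neg\sigma(a)$. That case should be handled separately. Taking $y\coloneqq\bot$ in $W_\Box$, together with $\vdash\Box\bot\rightarrow\Box x$, yields $\Box\neg\neg x\rightarrow\neg\neg\Box x$. Then $h(\Box\neg\neg\varphi)=0$, and the same canonical-model step gives $g$ with $R(h,g)>0=g(\varphi)$.
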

\begin{proof}
 The proof is very similar to the one for $KG$, with one minor difference that we will point out.

As before, consider the definition of the canonical model $\mathfrak{M}^\Omega$ (now from \cite{CaRo10}, see the definition in the proof of Theorem \ref{th:valueddiamondcompleteness}) with the modified universe
\[W^\Omega_{wit} \coloneqq \{u \in Hom(L_{\Box\Diamond}(V), [0, 1]_G): u(Th(MG + W_\Box)) = \{1\}\}\]
The fact that this model satisfies the truth lemma for the formulas in $\Omega$ is proved as in \cite{CaRo10}, and again, it follows from Theorem \ref{th:valueddiamondcompleteness} that this model is $\Diamond$-witnessed. Therefore, we only need to check the witnessing property for the formulas beginning with $\Box$ belonging to $\Omega$.  Since we proved in Lemma \ref{lem:NotWitBox} that $W_\Box$ is sound with respect to $\class{MG}^w$ this will conclude the proof of the theorem.

Again as before, consider any world $h$ and any formula $\Box \varphi \in \Omega$, and let $\alpha \coloneqq h(\Box \varphi)$ and $\beta \coloneqq max\{h(\Box \psi)\colon \Box \psi \in \Omega, h(\Box \psi) < \alpha\}$ if this maximum exists and $0$ otherwise -again, this implies $\alpha= 0$ too.
 Assume also that $\alpha < 1$, since otherwise the formula is trivially witnessed. Let 
\[
\chi \coloneqq \bigwedge \{\psi \colon \Box \psi \in \Omega, h(\Box \psi) > \alpha\}\qquad \text{ and } \qquad
\phi \coloneqq \bigwedge \{\psi \colon \Box \psi \in \Omega, h(\Box \psi) = \alpha\}.
\]
Again, for the same reasons as in the $KG$ case, 
\[\Box(\chi \rightarrow  ((\varphi \rightarrow \phi )\rightarrow \phi) \wedge (\phi \rightarrow \varphi))\] is not a theorem of the logic. 

In the $MG$ case, this now implies that  there exists some world $g$ such that $R(h,g) >  g(\chi \rightarrow  ((\varphi \rightarrow \phi )\rightarrow \phi) \wedge (\phi \rightarrow \varphi))$ (this is the main difference with the $KG$ case). 
Nevertheless, as in the $KG$ case, this implies that  (*) $g(\chi) > g(((\varphi \rightarrow \phi) \rightarrow \phi) \wedge (\phi \rightarrow \varphi))$.
As before, the value of the right side is greater than or equal to $g(\varphi)$, so from $(*)$ it follows that $g(\chi) > g(\varphi)$, and also that $e(\varphi) \leq e(\phi)$. Moreover, since $R(h,g) >  g(\chi \rightarrow  ((\varphi \rightarrow \phi )\rightarrow \phi) \wedge (\phi \rightarrow \varphi))$, necessarily 
$R(h,g) > g(\varphi)$ too. 

Assume $g(\varphi) > \alpha$, since otherwise $g$ is the witness we are looking for. As in the $KG$ case, for $\beta < \alpha$ consider then the endomorphism $\sigma \colon [0,1] \rightarrow [0,1]$ defined by

\[\sigma(x) \coloneqq \begin{cases} x &\hbox{ if }x > g(\varphi),  \\ \frac{ x - \beta}{g(\varphi) - \beta}(\alpha - \beta) + \beta &\hbox{ if } x \in (\beta, g(\varphi)], \\ x &\hbox{ otherwise.}\end{cases}\]

It is well defined since $g(\varphi) > \alpha$ (and so, greater than $\beta)$.

Otherwise (i.e., if $\beta = \alpha = 0$) let, as before, 

\[\sigma(x) \coloneqq \begin{cases} x &\hbox{ if }x > g(\varphi),\\ 0 &\hbox{ otherwise.}\end{cases}\]

We have that $g' \coloneqq \sigma \circ g$ is an homomorphism of $Fm$ into $[0,1]_G$, and such that $g(\psi) = 1$ implies that $g'(\psi) = 1$ too, so $g' \in W^{\Omega}_{wit}$. Let us check that $Rhg' \geq Rhg > \alpha$ (the main difference regarding calculations with respect to the crisp case).

Recall that $Rhg \coloneqq \bigwedge_{\Box \chi \in \Omega}h(\Box \chi) \rightarrow g(\chi) \wedge \bigwedge_{\Diamond \chi \in \Omega}g(\chi) \rightarrow h(\Diamond \chi)$.
Since $g'(x) \leq g(x)$, it is clear that $g'(\psi) \rightarrow h(\Diamond \psi) \geq g(\psi) \rightarrow h(\Diamond \psi)$ for any $\psi$.

To check the $\Box$ cases, let us consider the three possibilities:
\begin{itemize}
\item If $h(\Box \psi) > \alpha$, as in the $KG$ case it follows that $\sigma(g(\psi)) = g(\psi)$, concluding that $h(\Box \psi) \rightarrow g'(\psi) =  h(\Box \psi) \rightarrow g(\psi)$. 
\item If $h(\Box \psi) = g(\varphi) = \alpha$, as in the $KG$ case, we get that $\sigma(g(\psi)) \geq g(\varphi) = \alpha$. Therefore, $h(\Box \psi) = \alpha \leq g'(\psi)$.
\item If $h(\Box \psi) < g(\varphi)$ -therefore, $\beta < \alpha$ and we are using the first definition of $\sigma-$, again as in the $KG$ case,  if $g(\psi) \leq \beta$ the endomorphism $\sigma$ behaves as the identity, so $h(\Box \psi) \rightarrow g'(\psi) = h(\Box \psi) \rightarrow g(\psi)$, and otherwise, $h( \Box \psi) \leq \beta < g'(\psi)$.
\end{itemize}

\end{proof}

\begin{corollary}
$MG + W_\Diamond$ is complete with respect to the class of finite $\class{MG}$ models.
\end{corollary}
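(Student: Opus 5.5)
The plan is to read the corollary the way the two preceding results are read: it transfers a witnessed-completeness result to finite models through Observation~\ref{obs:finDepthDecidable}. Two things constrain what can be proved. First, by the Remark after Lemma~\ref{lem:NotWitDiamond}, $W_\Diamond$ fails on a two-world valued witnessed model, so $W_\Diamond$ is not sound for finite $\class{MG}$ models. The content I would establish is therefore the completeness direction only: every $\Gamma \cup \{\rho\}$ with $\Gamma \not\vdash_{MG + W_\Diamond} \rho$ has a finite $\class{MG}$ countermodel. Second, without $W_\Box$ the $\Box$-witnessing argument of Theorem~\ref{th:completenessVal} has nothing to use. So I would carry out the argument in the $\Diamond$-language $Fm_\Diamond$, where only $\Diamond$-formulas need witnesses.

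\textbf{Step 1.} Fix a non-derivable $\rho$ and $\Omega = SFm(\rho)$. Take the canonical model of Theorem~\ref{th:valueddiamondcompleteness}, with universe restricted to homomorphisms $u$ satisfying $u(Th(MG + W_\Diamond)) = \{1\}$. The truth lemma on $\Omega$ holds exactly as in \cite{CaRo15}. Only the existence of successors refuting a non-theorem is needed, and that argument relativizes to any axiomatic extension.

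\textbf{Step 2.} Show this restricted model is still $\Diamond$-witnessed, by rerunning the proof of Theorem~\ref{th:valueddiamondcompleteness}. The Claim uses only $N_\Diamond$, $FS_1$, $FS_2$, $F_\Diamond$ and distributivity, all available in the extension. The witness there is $w = g \circ u$ with $g$ a strictly increasing map fixing $0$ and $1$. Such a $g$ preserves the value $1$ and is a G\"odel homomorphism, so $w$ again validates $Th(MG + W_\Diamond)$ and lies in the restricted universe. The computation showing $R(v,w) = \alpha$ is then unchanged.

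\textbf{Step 3.} Unravel the witnessed countermodel into a tree whose depth is bounded by the modal depth of $\rho$. At each node, keep only the finitely many witnesses of the $\Diamond$-subformulas in $\Omega$, following Observation~\ref{obs:finDepthDecidable}. The result is a finite $\class{MG}$ model refuting $\rho$ at the root while satisfying $\Gamma \cap \Omega$.

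\textbf{Main obstacle.} The hard point is Step 3 whenever $\Box$-subformulas are present. Pruning to witnesses can raise the value of $\Box\psi$, because $\Box$ is not witnessed without $W_\Box$. This is exactly why I restrict to $Fm_\Diamond$. For the bi-modal reading, I would first try to show that in the pruned model each relevant $\Box$ value is still attained up to the $\Omega$-order type. Failing that, I would record that the bi-modal statement genuinely needs $W_\Box$, which is Theorem~\ref{th:completenessVal}.
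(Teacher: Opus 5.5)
\textbf{The literal statement is false, and the intended one is a different logic.} Your diagnosis of the printed statement is right. The Remark after Lemma~\ref{lem:NotWitDiamond} gives a two-world valued model refuting $W_\Diamond$, so $MG + W_\Diamond$ is unsound for finite $\class{MG}$ models. Since ``complete with respect to'' is a biconditional in this paper, the corollary cannot hold as written. It is a slip for $MG + W_\Box$, the logic of Theorem~\ref{th:completenessVal}. The corollary is meant to follow from that theorem exactly as the preceding corollary follows from Theorem~\ref{th:completenessCrisp}. The intended proof is immediate:
\begin{itemize}
\item The restricted canonical model of $MG + W_\Box$ is $\Diamond$-witnessed by Theorem~\ref{th:valueddiamondcompleteness}.
\item It is $\Box$-witnessed thanks to $W_\Box$ and the endomorphism $\sigma$.
\item Observation~\ref{obs:finDepthDecidable} then turns witnessed countermodels into finite ones.
\end{itemize}
Your closing fallback is precisely this argument, and it should have been the main line rather than the last resort.

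\textbf{Your salvage proves nothing new in $Fm_\Diamond$.} Restricted to $Fm_\Diamond$ and to one direction, your claim is trivial and never uses $W_\Diamond$. Every $MG_\Diamond$-derivation is an $MG + W_\Diamond$-derivation, and $MG_\Diamond$ is already complete for $\class{MG}^\omega$. So any non-derivability in $MG + W_\Diamond$ already has a finite countermodel, and your Steps 1--3 just re-derive this.

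\textbf{The bi-modal version is false, not merely hard.} $W_\Box$ is valid in all finite $\class{MG}$ models, yet it is not a theorem of $MG + W_\Diamond$. Take the crisp model of Lemma~\ref{lem:NotWitBox}, which refutes $W_\Box$ at $u$.
\begin{itemize}
\item It is $\Diamond$-witnessed. At the dead-end worlds $v_i$, every formula collapses to a G\"odel formula in $x,y$. Its value lies among $0, \frac12, \frac12+\frac1i, 1$, and the order type is the same for all $i \geq 3$. Hence every supremum at $u$ is attained at $v_2$ or $v_3$.
\item By the soundness argument of Lemma~\ref{lem:NotWitDiamond}, this crisp $\Diamond$-witnessed model validates every instance of $W_\Diamond$ at $u$.
\item At each $v_i$ the antecedent of $W_\Diamond$ evaluates to $0$, so $W_\Diamond$ holds there too.
\item Like every G\"odel--Kripke model, it validates all of $MG$.
\end{itemize}
So no pruning argument can recover attained $\Box$-values from $MG + W_\Diamond$. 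Your ``first try'' in the last paragraph is therefore doomed. The missing ingredient is exactly the axiom $W_\Box$.
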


A natural observation that follows from the previous results is that  $KG+W_\Box+W_\Diamond$, and $MG+W_\Box$ are the corresponding logics arising from the class of models evaluated over all finite G\"odel chains, and equivalently, over the G\"odel set $[0,1]_{\uparrow} = \{0, \frac{1}{2},\frac{2}{3},\ldots\frac{i}{i+1},\ldots 1\}$. It is also natural to see that the logics arising from the models evaluated over $[0,1]_{\downarrow}$ are axiomatized as the ones before, but substituting $W_\Box$ by $W_\Box' \coloneqq \neg \Box x \vee W_\Box$.




\section{Conclusions}

In this paper, we have provided a comprehensive analysis of the completeness of modal G\"odel logics with respect to finite valued Kripke models. We solved a long-standing open problem by demonstrating that the previously proposed candidate formulas, $UW_\Box$ and $UW_\Diamond$, are insufficient to restore completeness for finite G\"odel-Kripke models. Building on these negative results, we successfully introduced new unwitnessing formulas and established sound and complete axiomatizations for both the crisp and valued finite-model semantics, including their mono-modal fragments. Furthermore, we refined the semantic landscape by characterizing intermediate partial witnessing conditions that hold for basic modal G\"odel logics.

\section{Acknowledgements}
Amanda Vidal has been funded by the ERDF-Project "Knowledge in the Age of Distrust" from the Programme Johannes Amos Comenius under the Ministry of Education, Youth and Sports of the Czech Republic, CZ.02.01.01/00/23\_025/0008711. Ricardo O. Rodriguez would like to remark that this work was completed despite the lack of support from the scientific funding agencies of Argentina. On the other hand, his work has been funded by the European MSCA-SE SemPER project No. 101299559.

\bibliographystyle{plain}

\end{document}